\theoremstyle{plain}
 \newtheorem{thm}{Theorem}[section]
 \newtheorem{prop}[thm]{Proposition}
 \newtheorem{lem}[thm]{Lemma}
 \newtheorem{cor}[thm]{Corollary}
\theoremstyle{definition}
 \newtheorem{exm}[thm]{Example}
 \newtheorem{dfn}[thm]{Definition}
\theoremstyle{remark}
 \newtheorem{rem}{Remark}[section]
 \numberwithin{equation}{section}
\renewcommand{\geq}{\geqslant}
\title[Periodic orbits in Hamiltonian systems with involutory symmetries]{Periodic orbits in Hamiltonian systems with involutory symmetries}
\subjclass[2010]{37J15; 37C27}
\keywords{symmetry, time-reversing symmetry, Liapunov centre theorem, nonlinear normal modes}
\author[R. Alomair]{\bfseries Reem Alomair}
\address{
School of Mathematics \\ 
The University of Manchester  \\ 
Manchester M13 9PL\\
United Kingdom}
\email{reem.alomair@postgrad.manchester.ac.uk}
\author[J. Montaldi]{\bfseries James Montaldi}
\address{
School of Mathematics \\ 
The University of Manchester  \\
Manchester M13 9PL\\
United Kingdom}
\email{j.montaldi@manchester.ac.uk}
\begin{document}

\vspace{18mm} \setcounter{page}{1} \thispagestyle{empty}

\begin{abstract}
We study the existence of families of periodic solutions in a neighbourhood of a symmetric equilibrium point in two classes of Hamiltonian systems with involutory symmetries. In both classes, involutions reverse the sign of the Hamiltonian function. In the first class we study a Hamiltonian system with a  reversing involution R acting symplectically. We first recover a result of Buzzi and Lamb showing that the equilibrium point is contained in a three dimensional conical subspace which consists of a two parameter family of periodic solutions with symmetry R and there may or may not exist two families of non-symmetric periodic solutions, depending on the coefficients of the Hamiltonian. 
In the second problem we study an equivariant Hamiltonian system with a symmetry S that acts anti-symplectically. Generically, there is no S-symmetric solution in a neighbourhood of the equilibrium point. Moreover, we prove the existence of at least 2 and at most 12 families of non-symmetric periodic solutions.  We conclude with a brief study of systems with both forms of symmetry, showing they have very similar structure to the system with symmetry R. 
\end{abstract}

\maketitle

\section{Introduction}

A classical approach in the analysis of Hamiltonian systems is to study the existence of periodic orbits near  equilibria. A basic theorem on the existence of periodic solutions in Hamiltonian systems is the Liapunov centre theorem, which states that if the linearized flow at an equilibrium point has a simple purely imaginary eigenvalue satisfying a non-resonance condition then there exists a smooth 2-dimensional manifold which passes through the equilibrium point and consists of a one parameter family of periodic solutions, or nonlinear normal mode.  In this work we  extend this theorem to two classes of Hamiltonian systems with involutory symmetries where in both cases the involution reverses the sign of the Hamiltonian. In the first case, already studied by Buzzi and Lamb \cite{b1}, the involution is symplectic while in the second case it is anti-symplectic.

 In the literature, there are versions of  the Liapunov centre theorem for reversible systems, but they mostly deal with the classical case where the reversing symmetry acts anti-symplectically. For example see Devaney \cite{b13}.  In this paper we consider two types of symmetry. First is the existence of periodic solutions in a time reversing Hamiltonian system equipped with an involution $R$ that acts symplectically. The problem was introduced and analysed by Buzzi and Lamb \cite{b1}. If the linear system has two pairs of purely imaginary eigenvalues, they prove in a neighbourhood of a symmetric equilibrium point the existence of a three dimensional subspace consists of a two parameter family of $R$-periodic solutions with period close to $2\pi$. In addition, they claim to find two families of non-symmetric periodic solutions whose period tends to $2\pi$ as they  approach the equilibrium point, for an open dense set of coefficients (however there is a sign error in one of their calculations).  Motivated by this work, we looked at the problem using different coordinates, and hence a different set of invariants.  We recover their result on the existence of symmetric periodic solutions but obtain a different conclusion for the non-symmetric solutions. We determine an expression in the fourth order normal form and show that if this expression is positive there are two families of non-symmetric solutions, while if it is negative there are none.  

The second problem we discuss is the dynamics near an equilibrium point in an equivariant Hamiltonian system with an involutory  (time preserving) symmetry $S$ acting anti-symplectically.   Bifurcations of equilibria in Hamiltonian systems with such symmetry have been considered recently by M.~Bosschaert and H.~Han{\ss}mann \cite{BH}.   Existence theorems for  periodic solutions in symmetric Hamiltonian systems  can be found in Montaldi et al \cite{b8}, \cite{b9}, but this and related work assumes the symmetry transformation acts symplectically.  We prove that for systems with this anti-symplectic symmetry, generically,  there are no symmetric periodic orbits in a neighbourhood of an equilibrium point.  Moreover, we prove the existence of at least 2 and at most 12 non-symmetric families of periodic solutions (nonlinear normal modes) in a neighbourhood of the equilibrium point under the same generic conditions.  
 
In both cases, since the involution reverses the sign of the Hamiltonian and we assume the linear system is periodic, the equilibrium will be in 1:-1 resonance.

The paper  is organised as follows. In Section \ref{s1} we introduce basic facts and definitions of Hamiltonian systems with symmetry. Section 3 lists normal forms of the Hamiltonian linear system $L$, the structure map $J$ and the symmetry elements $R$ and $S$ in $\mathbb{C}^2$. Section 4 reviews the standard tool used to find periodic orbits in Hamiltonian systems:  Liapunov-Schmidt reduction. In Section 5 we state and prove our theorem on the existence of families of periodic orbits in the $R$-reversible Hamiltonian system with $R$ acting symplectically. In Section 6 we give our main result on the existence of periodic solutions in the $S$-equivariant Hamiltonian system with $S$ acting anti-symplectically. Finally, in Section 7 we study the existence of periodic solutions in systems with the combined symmetry $\mathbb{Z}_2^R \times\mathbb{Z}_2^S$ reversible/equivariant Hamiltonian system.

\section{Hamiltonian systems with symmetry}
\label{s1}
In this section we recall  some basic facts and definitions on Hamiltonian systems with symmetry.
 Let $(\mathbb{R}^{2n},\omega)$ be a symplectic space, i.e  an even dimensional vector space  equipped with a symplectic form $\omega$. Recall that a symplectic form is a non-degenerate, skew symmetric, bilinear form.  Then there exists a structure map $J$ satisfying $J^*=-J$ ($J^*$ denotes the transpose of $J$) and $J^2=-I$ such that $\omega(x,y)=\langle x,Jy \rangle$ for $x,y \in \mathbb{R}^{2n}$, where $\langle .,. \rangle$ is the standard inner product in $\mathbb{R}^{2n}$. Let $H:\mathbb{R}^{2n}\rightarrow\mathbb{R}$ be a Hamiltonian function. The Hamiltonian vector field $f$ generated by $H$ is symplectic, i.e.\ its flow preserves the symplectic form $\omega$, and is defined by

\begin{equation}
\label{p1}
\dot{x}=f(x)=J\nabla H.
\end{equation}
By using canonical coordinates for the symplectic form $\omega$ given in Darboux theorem \cite{b6} one can write
\[J=\left( \begin{array}{cc}
0&-I_n\\
I_n&0
\end{array}\right).\]

 In this work we will deal with two types of symmetry, equivariant symmetries and time-reversing symmetries.
 \begin{dfn}
 Let $S,R$ be two linear transformations of $\mathbb{R}^{2n}$, then
 \begin{enumerate}
 \item  The  vector field $f$ is called $S$-equivariant if
 \[f(Sx)=Sf(x)\, , \forall x\in \mathbb{R}.\] If $x(t)$ is a solution of \eqref{p1}, then $Sx(t)$ is also a solution and $S$ is referred to as a symmetry.
 \item The vector field $f$ is called $R$-reversible if
 \[f(Rx)=-Rf(x)\, , \forall x\in \mathbb{R}.\] If $x(t)$ is a solution of \eqref{p1}, then $Rx(-t)$ is also a solution. Such a transformation is called a time reversing symmetry.
\end{enumerate}
\end{dfn}
The symmetry of a periodic solution is given by the following definition.

\begin{dfn}
Let $x(t)$ be a periodic solution of the  dynamical system $\dot{x}=f(x)$.
\begin{enumerate}
\item If $S$ is a symmetry of the system then $x(t)$ is said to be $S$-symmetric if\[Sx(t+\theta)=x(t),\]
for some $\theta \in S^1$.
\item If $R$ is a reversing symmetry of the system then $x(t)$ is said to be $R$-symmetric if \[Rx(\theta-t)=x(t),\]
for some $\theta \in S^1$.
\end{enumerate}
Here we identify $S^1$ with $\mathbb{R}/T\mathbb{Z}$, where $T$ is the period of $x(t)$.  In both cases, a symmetric periodic orbit symmetric if and only if it is set-wise invariant.
\end{dfn}

In the Hamiltonian context, (reversing) symmetries can arise in two ways: they can either be symplectic or antisymplectic.
A (reversing) symmetry $T$ is symplectic if $\omega(Tx,Ty)=\omega(x,y),\forall x,y\in \mathbb{R}$ and anti-symplectic if $\omega(Tx,Ty)=-\omega(x,y),\forall x,y\in \mathbb{R}$. In matrix form we can choose a basis so that $T$ is orthogonal, and then $T$ is symplectic if $TJ= JT$ and anti-symplectic if $TJ=- JT$.

Note for example that by \eqref{p1}, if a reversing symmetry $R$ is symplectic then it must reverse $\nabla H$, and if we assume (as we may, and do) that $H(0)=0$ then this is equivalent to $H(Rx) = -H(x)$, so that $H$ is `anti-invariant'.   There are in all 4 possibilities of symmetry, labelled as follows

\begin{table}[h]
\begin{tabular}{c|ccc}
  type & $\omega$ & $f$ & $H$ \\
\hline
SE & +1 & +1 & +1 \\
AR & -1 & -1 & +1 \\
SR & +1 & -1 & -1 \\
AE & -1 & +1 & -1 \\
\end{tabular}
\caption{The `type' refers to a transformation being Symplectic-Equivariant, or Antisymplectic-Reversing etc.}
\end{table}

Note that if $T$ is an involution which reverses the sign of $H$, then any symmetric periodic orbit must lie in the set where $H=0$.  There may on the other hand be periodic orbits on which $H$ is non-zero, and then $T$ will exchange two such orbits, one with $H>0$ and the other with $H<0$.  We will see this in more detail in later sections.

\section{Linear Hamiltonian systems with involutory symmetries}

In this section we give the normal forms of linear Hamiltonian systems with involutory symmetries. Recall that an involution is a transformation of order $2$. An important assumption that is required for studying the existence of periodic orbits is the presence of purely imaginary eigenvalues of the linear Hamiltonian vector field.

 Let $L\in sp_J(2n,\mathbb{R})$ be a linear Hamiltonian vector field. Thus,
\[LJ=-JL^*,\]
where $J$ is the structure map defined in the previous section. By Bochner's theorem \cite{b5}, a (reversing) symmetry $T$ can be chosen to be linear and orthogonal. Therefore, the (reversing) equivariant condition can be written as
\[LT=\pm TL,\]
and the (anti-)symplectic property of $T$ is given by
\[TJ=\pm JT.\]

In \cite{b3}, Hoveijn et al. gave normal forms of linear systems in eigenspaces of (anti-) automorphisms of order two, which can be adapted to our problem. These normal forms are based on writing $\langle J,T \rangle-$ invariant subspaces. Since we are interested in generic systems with given symmetry, then by \cite{b3} we can only focus on the case when $L$ is semi-simple. Also, we assume that $L$ has at least one pair of purely imaginary eigenvalues $\pm i$. Normal forms of $T,J$ and $L$ are given in the following lemma.
We use the notation,
$$
  I_2=\begin{pmatrix}1&0\cr 0&1\end{pmatrix},\quad
  J_2=\begin{pmatrix}0&-1\cr 1&0\end{pmatrix},\quad\hbox{and}\quad
  S_2=\begin{pmatrix}1&0\cr 0&-1\end{pmatrix}.
$$

\begin{lem}
\label{l1}
Let $L$ be a linear Hamiltonian vector field on $\mathbb{R}^{2n}$.
\begin{enumerate}
\item[i)] Suppose $L$ is $R$-reversible, with $R$ acting symplectically (symmetry type SR).\\
Let $V$ be a minimal $(L,J,R)$-invariant subspace on which $L$ has eigenvalues $\pm i$. Then $\dim V=4$ and $R|_V,J|_V$ and $L|_V$ can take the following normal forms
\[
R|_V=\left(
\begin{array}{cc}
0&I_2\\
I_2&0\\
\end{array}
\right), \quad
J|_V=\left(
\begin{array}{cc}
J_2&0\\
0&J_2\\
\end{array}
\right), \quad\hbox{and}\quad
L|_V=\left(
\begin{array}{cc}
J_2&0\\
0&-J_2\\
\end{array}
\right).
\]

\item [ii)]Suppose now $L$ is $S$ equivariant, with $S$ acting anti-symplectically (symmetry type AE).\\
Let $V$ be a minimal $(L,J,S)$-invariant subspace on which $L$ has eigenvalues $\pm i$. Then $\dim V=4$ and $S|_V,J|_V$ and $L|_V$ can take the following normal forms
\[
S|_V=\left(
\begin{array}{cc}
0&S_2\\
S_2&0\\
\end{array}
\right),
\quad J|_V=\left(
\begin{array}{cc}
J_2&0\\
0&J_2\\
\end{array}
\right), \quad\mbox{and}\quad
L|_V=\left(
\begin{array}{cc}
J_2&0\\
0&-J_2\\
\end{array}
\right).
\]
\end{enumerate}
\end{lem}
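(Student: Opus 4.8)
The plan is to reduce the statement to a concrete linear algebra normal form computation, treating cases (i) and (ii) in parallel since the target forms for $J$ and $L$ coincide and only the involution differs ($R$ versus $S$). First I would set up the constraints: we are given $L \in sp_J(2n,\mathbb R)$ semisimple with $\pm i$ among its eigenvalues, an orthogonal involution $T$ (equal to $R$ or $S$) with $T^2 = I$, the symplectic/antisymplectic condition $TJ = \pm JT$, and the reversing/equivariant condition $LT = \pm TL$ (with signs $-$ for $R$-reversible in case (i), $+$ for $S$-equivariant in case (ii)). I would then complexify and look at the $\pm i$ eigenspaces $E_{\pm i} = \ker(L \mp iI)$ of $L$ inside $V^{\mathbb C}$.

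Next I would show $\dim V = 4$. The key is that $J$, being symplectic ($JL=\ldots$, actually $LJ=-JL^*$ plus $J^2=-I$) maps $E_{i}$ to $E_{i}$ (since $J$ commutes with... ) — more precisely one checks how $J$ and $T$ act on $E_{\pm i}$. In case (i), $R$ reverses $L$ so $R$ maps $E_i$ to $E_{-i}$; in case (ii), $S$ commutes with $L$ but is antilinear-like in effect through $J$... I would argue that the smallest subspace closed under $L$, $J$ and $T$ and meeting $E_{\pm i}$ nontrivially must contain a one-complex-dimensional piece of $E_i$, its image under $J$ (which could coincide or be independent), and the images of these under $T$ landing in $E_{-i}$, and that minimality plus the involution and symplectic relations force exactly $\dim_{\mathbb C}(V^{\mathbb C}\cap E_i)=2$, hence $\dim V = 4$. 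Then, choosing a suitable eigenvector $v \in E_i$ and building the real basis $\{\mathrm{Re}\,v, \mathrm{Im}\,v, \mathrm{Re}(Tv \text{ or } JTv), \ldots\}$, I would verify that in this basis $J$ takes the block form $\mathrm{diag}(J_2, J_2)$, $L$ takes $\mathrm{diag}(J_2, -J_2)$ (the $-J_2$ block reflecting the $E_{-i}$ part), and $T$ takes the antidiagonal form $\begin{pmatrix} 0 & I_2 \\ I_2 & 0\end{pmatrix}$ in case (i) or $\begin{pmatrix} 0 & S_2 \\ S_2 & 0 \end{pmatrix}$ in case (ii). The sign pattern in $L$'s second block is what encodes the $1{:}{-}1$ resonance, and I would make sure the orthonormality and symplectic normalization of the chosen eigenvector is used to pin down the $J_2$ (rather than some scalar multiple) and to get $T$ exactly, not just up to conjugacy.

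The alternative, cleaner route is simply to invoke the classification of Hoveijn--L{\'o}pez de Medrano--van Strien \cite{b3}: their normal forms for linear maps in eigenspaces of order-two (anti)automorphisms, applied to the group generated by $J$ and $T$ acting on the $\pm i$ eigenspace, directly yield these $4$-dimensional indecomposable blocks, and one only needs to match conventions and compute which of their listed blocks corresponds to symmetry types SR and AE with eigenvalue $\pm i$. I would present the proof as: quote \cite{b3} to get that a minimal invariant subspace carrying $\pm i$ has a standard form, then do the short explicit computation identifying that form with the matrices displayed in the lemma, checking all the relations $T^2=I$, $J^2=-I$, $L^2 = -I|_V$ (so $\pm i$ are indeed the eigenvalues), $TJ = \mp JT$, $LT = \mp TL$ hold for the displayed matrices — this last verification is a routine $4\times 4$ block check.

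The main obstacle I expect is not any single hard step but getting the \emph{normalization} exactly right: the relations only determine $L|_V$, $J|_V$, $T|_V$ up to simultaneous conjugacy by elements commuting with the appropriate structure, and one must argue that the residual freedom (scaling and the action of the commutant) can always be used to bring the triple to precisely the displayed forms — in particular that the sign in the $-J_2$ block cannot be absorbed away (it is a genuine invariant distinguishing the $1{:}{-}1$ resonance from $1{:}1$), while the off-diagonal $I_2$ or $S_2$ in $T$ is forced by $T$ being an orthogonal involution with $T^2 = I$ together with how $T$ intertwines the two $J_2$-blocks. Ensuring these choices are consistent across the two cases, and that minimality genuinely gives dimension $4$ and not $2$ or $8$, is where I would spend the most care.
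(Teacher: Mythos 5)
The central step is missing. The paper proves only case (ii) (case (i) is quoted from Buzzi--Lamb \cite{b1}), and its proof hinges on one specific mechanism: if there were a two-dimensional $S$-invariant symplectic subspace $W$ carrying the eigenvalues $\pm i$, then on $W$ the restrictions of $L$ and of the structure map coincide up to a rescaling of time, so $SL=LS$ would force $S$ to commute with $J|_W$, i.e.\ to act symplectically there, contradicting anti-symplecticity; hence the minimal invariant subspace is $V=W\oplus S(W)$, which is four-dimensional, and the relations $J|_{S(W)}=-J|_W$, $L|_{S(W)}=L|_W$ then give the stated blocks (via \cite{b3}), with the complex change of variables $z_1=w_1$, $z_2=\bar w_2$ aligning case (ii) with case (i). In your proposal this is exactly the point that is asserted rather than proved: ``minimality plus the involution and symplectic relations force $\dim_{\mathbb C}(V^{\mathbb C}\cap E_i)=2$'' is the claim that needs an argument, and you offer no mechanism excluding a two-dimensional minimal subspace (the case $\dim_{\mathbb C}(V^{\mathbb C}\cap E_i)=1$). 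Compounding this, the parenthetical justification that $J$ maps $E_i$ to $E_i$ ``since $J$ commutes with\dots'' is unjustified and in general false before normalisation: the Hamiltonian condition is $LJ=-JL^*$, not commutation, and already for $L=J_2\,\mathrm{diag}(a,1/a)$ with $a\neq 1$ the eigenspace $E_i$ is not $J$-invariant, so the complexified bookkeeping cannot begin as stated.

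Your fallback route --- quote \cite{b3} (note: that paper is by Hoveijn, Lamb and Roberts, not the authors you name) and then verify that the displayed matrices satisfy $T^2=I$, $J^2=-I$, $TJ=\pm JT$, $LT=\pm TL$ --- is close in spirit to what the paper actually does, but verifying that the displayed triple is \emph{consistent} with the relations does not prove the lemma: one must show that every minimal $(L,J,T)$-invariant subspace carrying $\pm i$ has dimension four and can be brought to precisely this form, i.e.\ one must actually locate the corresponding block in the classification of \cite{b3} and rule out smaller invariant pieces; that identification is the entire content and your sketch defers it. Until the two-dimensional case is excluded by an argument of the above kind (with the analogous parity/determinant argument for the symplectic reversing involution $R$ in case (i)), the proposal establishes neither the dimension claim nor the normal forms.
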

\begin{proof}
Normal forms (i) are given in \cite{b1}. For (ii),  Let $W$ be a 2-dimensional symplectic subspace on which $L$ has the pair of eigenvalues $\pm i$ and $S(W)=W$. It is known in Hamiltonian context that $L$ and $J$ can take the same normal form on $W $ taking into account multiplication of time by a scalar. Equivariance property yields $SL=LS$. On $W$, $L$ and $J$ take the same form which gives $SJ=JS$ which contradicts the fact that $S$ is acting anti-symplectically. Thus, the minimal invariant subspace is four dimensional and is given by $V=W\oplus W', \, W'=S(W)$. The anti-symplectic property implies $J|_{W'}=-J|_{W}$ while equivariance gives $L|_{W'}=L|_{W}=J|_{W}$. Therefore, normal forms given in \cite{b3} show
\[
S|_V=\left(
\begin{array}{cc}
0&I_2\\
I_2&0\\
\end{array}
\right), \quad
J|_V=\left(
\begin{array}{cc}
J_2&0\\
0&-J_2\\
\end{array}
\right), \quad\hbox{and}\quad
L|_V=\left(
\begin{array}{cc}
J_2&0\\
0&J_2\\
\end{array}
\right).
\]
 To get the same formulas for $J$ and $L$ given in (i) apply the change of coordinates on $\mathbb{C}^2$ given by
 \[z_1=w_1, z_2=\bar{w_2}.\]
In these new coordinates $S,J$ and $L$ takes the forms given in (ii).
\end{proof}

Note that with these conventions, $L$ and $J$ take the same form in both cases, and the quadratic part $H_2$ of the Hamiltonian in both is given by
$$H_2(z_1,\,z_2) = |z_1|^2 - |z_2|^2;$$
that is, $H$ has a 1:-1 resonance.  The higher order terms will differ for the two cases, as we see below.

\section{Liapunov-Schmidt reduction}
The classical approach to finding periodic orbits in Hamiltonian systems is to solve a variational equation on the loop space. This equation is of  infinite dimension and can be reduced by Liapunov-Schmidt Reduction. In this section we will give an overview of that method and how to use it in finding periodic orbits near an equilibrium point in a reversible equivariant Hamiltonian system. We chose the reversible equivariant case to cover all symmetry cases discussed in this paper. We will follow the settings given in \cite{b1} and \cite{b4}.

Consider the vector field $f:\mathbb{R}^n\rightarrow\mathbb{R}^n,$ which has an equivariant reversing symmetry group $G$. This implies the existence of a representation $\rho:G\rightarrow O(n)$ and a reversing sign $\sigma:G\rightarrow \{\pm 1\}$ such that
\[f\rho(g)=\sigma(g)\rho(g)f,\forall g \in G.\]

In the following we give briefly the main steps of the Liapunov-Schmidt reduction and details can be found in \cite{b1}.

\subsection{Defining the operator $\Phi$}
Let $\Phi:\mathcal{C}_{2\pi}^1\times \mathbb{R}\rightarrow\mathcal{C}_{2\pi}$  be given by
\begin{equation}
 \Phi(u,\tau)=(1+\tau)\displaystyle\frac{du}{dt}-f(u)
  \end{equation}
 where $\mathcal{C}_{2\pi}$ is the Banach space of $\mathbb{R}^n$-valued continuous $2\pi$-periodic functions and $\mathcal{C}_{2\pi}^1$ is the space of $\mathcal{C}_{2\pi}$ functions that are continuously differentiable. It is readily seen that zeros of $\Phi$ are periodic solutions of the  dynamical system generated by $f$ with period $\frac{2\pi}{1+\tau}$. Now we can define the group action on the loop space $\mathcal{C}_{2\pi}$ as follows
 \[T:\widetilde{G}\times \mathcal{C}_{2\pi}\rightarrow \mathcal{C}_{2\pi}\]
 \[(T_gu)(t)=\rho(\gamma)(u(\sigma(\gamma)t+\theta)),\]
where $g=(\gamma,\theta)$ is an element of $\tilde{G}=G\ltimes S^1$. Straightforward calculations imply that the operator $\Phi$ is $\tilde{G}$- reversible equivariant, that is
\[\Phi(T_gu,\tau)=\sigma(\gamma)T_g\Phi(u,\tau),\quad \forall g=(\gamma,\theta)\in\widetilde{G}.\]

The linear part of $\Phi$ is defined by
 \[\mathcal{L}=(d\Phi)_{(0,0)}.\]

 It is readily verified that $\mathcal{L}$ is $\widetilde{G}$-reversible equivariant.

\subsection{The splittings}
  Consider the splittings
  \begin{equation}
  \label{p2}
  \mathcal{C}_{2\pi}^1=\ker{\mathcal{L}}\oplus{(\ker{\mathcal{L}})}^\bot\,\text{and}\,\mathcal{C}_{2\pi}={(\mathrm{range} { \mathcal{L}})}^\bot\oplus \mathrm{range}{\mathcal{L}},
  \end{equation}
  where the complements are taken with respect to the inner product
   \[ [u,v]=\int_{\tilde{G}} \langle T_gu,T_gv \rangle d\mu,\]
  where $\mu$ is a normalized Haar measure for $\widetilde{G}$ and $\langle u,v \rangle=\int_0^{2\pi}[u(t)]^t v(t)\,dt$. The splittings \eqref{p2} are $T_g$-invariant.
   Now we define the projections
  \[E:\mathcal{C}_{2\pi}\rightarrow \mathrm{range}{\mathcal{L}}\]
  \[I-E:\mathcal{C}_{2\pi}\rightarrow {(\mathrm{range}{ \mathcal{L}})}^\bot.\]
  Invariance  of \eqref{p2} under $T_g$ implies that the projections $E$ and $I-E$ commute with $T_g$.
  We start this step by solving the equation
   \[E\Phi(v+w,\tau)=0,\]
  for $w$ by the implicit function theorem, where $u=v+w,v\in\ker{\mathcal{L}}\, , w\in{(\ker{\mathcal{L}})}^\bot$. The solution $W=W(v,\tau)$ commutes with $T_g$. Thus, the Liapunov-Schmidt method reduces the original problem to the problem of finding the zeros of the bifurcation map which is defined by
      \[\varphi:\ker{\mathcal{L}}\times \mathbb{R}\rightarrow {(\mathrm{range}{\mathcal{L}})}^\bot\,\]
      \[\varphi(u,\tau)=(I-E)\Phi(v+W(v,\tau),\tau).\]
       An important property of the bifurcation map $\varphi$ is $\widetilde{G}$ reversing-equivariance property, i.e
      \[\varphi(T_gu,\tau)=\sigma(\gamma)T_g\varphi(u,\tau),\forall g\in \widetilde{G}.\]
 The last feature to be considered is the Hamiltonian structure of the bifurcation map.  Using the implicit Hamiltonian constrain given in \cite{b4} and \cite{b1} one can show that $\Phi$ is a parameter dependent Hamiltonian vector field.

 According to the actions of $G$ being (anti-)symplectic we define the symplectic sign $\chi$ by the homomorphism $\chi:G\rightarrow\{\pm 1\}$ such that
\[\omega(\gamma x,\gamma y)=\chi(\gamma)\omega(x,y),\gamma\in G.\]
Therefore, the weak symplectic form $\Omega$ will satisfy
\[\Omega(g u,g v)=\chi(\gamma)\Omega( u,v),g=(\gamma,\theta)\in\widetilde{G},\]
and the Hamiltonian sign is given by
\begin{equation}
\label{hamil}
\mathcal{H}(g u,g v)=\sigma(\gamma)\chi(\gamma)\mathcal{H}(u,v).
\end{equation}

In all cases we discuss $\ker{\mathcal{L}}$ is finite dimensional and thus $\ker {\mathcal{L}}=\ker {\mathcal{L}^*}$ and so by \cite[Theorem 6.2]{b4} the bifurcation equation is a Hamiltonian vector field. Its corresponding Hamiltonian $h$ satisfies the (semi-)invariance properties given in \eqref{hamil} restricted to $\ker{\mathcal{L}}$ i.e.
\begin{equation}
\label{hamil2}
h(gu)=\sigma(\gamma)\chi(\gamma)h(u), u\in \ker{\mathcal{L}},
\end{equation}
where as before $g=(\gamma,\theta)$ for some $\theta\in S^1$.  In practice, the function $h$ can be computed to any finite degree by using normal form transformations, as described for example in \cite{b9} (the discussion there is for symplectic symmetries, but is equally valid for all four cases listed in Table 1).

\section{Symplectic time-reversing involution}
In this section we prove the existence of symmetric and non-symmetric periodic solutions in a Hamiltonian systems with a reversing involutory symmetry acting symplectically (type SR in Table 1). The problem was first studied by C. Buzzi and J. Lamb \cite{b1}, but there is a minor sign error in the calculations in Lemma 6.4 which effects the statement in their Theorem 6.1. They (correctly) prove the existence of a three dimensional conical subspace of symmetric periodic solutions in a neighbourhood of the origin. Also, they find that the origin is contained in two 2-dimensional manifolds each containing a  non-symmetric family of periodic solutions with period close to $2\pi$. Using our expressions for the (semi-)invariants, we first recover their result on the symmetric solutions,  and then we correct their Theorem 6.1 to show that generically there may or may not be two families of non-symmetric periodic orbits in a neighbourhood of the equilibrium point 0 depending on the coefficients of the Hamiltonian.  Buzzi and Lamb also distinguish between two cases, called elliptic and hyperbolic, distinguishing between the possibilities of the period function on the 3-dimensional family being monotonic or not.  It turns out that this distinction coincides with the two cases of existence or non-existence of non-symmetric periodic orbits.

By the normal forms given in Lemma \ref{l1} (i), we have $\dim \ker \mathcal{L}=4$, so we can write $\ker \mathcal{L}\cong \mathbb{C}^2$. Therefore, the bifurcation map is given by
\begin{align*}
\varphi:\mathbb{C}^2 \times \mathbb{R}\rightarrow \mathbb{C}^2\\
\varphi=2J \nabla_{z}h
\end{align*}
 with Hamiltonian function
\begin{equation*}
h:\mathbb{C}^2 \times \mathbb{R}\rightarrow \mathbb{R},
\end{equation*}
which satisfies \eqref{hamil2}.
Denote by $\mathbb{Z}_2^R$ the cyclic group generated by $R$, which together with $S^1$ gives $S^1\rtimes \mathbb{Z}_2^R$. The reversing symmetry $R$ acts on  $\mathbb{C}^2$ by
\begin{equation*}
R(z_1 ,z_2)=(z_2 ,z_1)
\end{equation*}
while the $S^1$ action is defined by
\begin{equation*}
\theta(z_1 ,z_2)=(e^{i\theta} z_1 ,e^{-i\theta}z_2).
\end{equation*}
Let $\mathcal{E}$ be the ring of $S^1$ invariants, then one can write
\[\mathcal{E}=\mathcal{E}_+\oplus\mathcal{E}_-,\]
where $\mathcal{E}_+$ consists of $\mathbb{Z}_2^R$ invariants and $\mathcal{E}_-$ consists of $\mathbb{Z}_2^R$ anti-invariants.
\begin{lem}
\label{l2}
Let $S^1\rtimes\mathbb{Z}_2^R$ act on $\mathbb{C}^2$ as above, then
\begin{enumerate}
\item $\mathcal{E}$ is the ring generated by $A,B,C,D$ where
$A=|z_1|^2, B=|z_2|^2, C+iD= 2z_1 z_2$.
\item $\mathcal{E}_+$ is the subring of $\mathcal{E}$ generated by $N,C,D$ where
$ N=|z_1|^2+ |z_2|^2 $, and $\mathcal{E}_-$ is the module over $\mathcal{E}_+$  generated by the function $\delta =|z_1|^2-|z_2|^2$.
\item The orbit map $O: \mathbb{C}^2\rightarrow\mathbb{R}^3$ defined by $(z_1,z_2)\rightarrow(N,C,D)$ has image
$$\left\{(N,C,D) \mid N^2 \geq C^2 +D^2\right\}.$$
\end{enumerate}

Note that the functions $N,C,D$ and $\delta$ satisfy the identity $\delta^2=N^2 -C^2 -D^2$.
\end{lem}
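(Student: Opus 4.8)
The plan is to prove the three assertions in turn: reduce (1) to the elementary invariant theory of the circle action with weights $(1,-1)$, deduce (2) from (1) by a linear change of generators together with the $\mathbb{Z}_2^R$-eigenspace decomposition, and prove (3) by exhibiting an explicit point in each fibre of the orbit map. For (1) I would argue first at the level of polynomial invariants; the statement for smooth invariant germs (which is what is actually used later) then follows from the Hilbert--Weyl and Schwarz theorems. Expand an $S^1$-invariant polynomial as a sum of monomials $z_1^{a}\bar z_1^{b}z_2^{c}\bar z_2^{d}$; invariance under $\theta(z_1,z_2)=(e^{i\theta}z_1,e^{-i\theta}z_2)$ forces the weight condition $a-b=c-d$. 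Writing $k=a-b=c-d$, for $k\ge 0$ one has $z_1^{a}\bar z_1^{b}z_2^{c}\bar z_2^{d}=|z_1|^{2b}|z_2|^{2d}(z_1z_2)^{k}=A^{b}B^{d}\bigl(\tfrac12(C+iD)\bigr)^{k}$, and for $k<0$ one uses $(\bar z_1\bar z_2)^{-k}=\bigl(\tfrac12(C-iD)\bigr)^{-k}$. Hence every invariant monomial is a monomial in $A,B,C+iD,C-iD$, so these generate the complex invariants; taking real parts (legitimate since $A,B,C,D$ are real-valued) gives $\mathcal{E}=\mathbb{R}[A,B,C,D]$. Since $|C+iD|^{2}=4|z_1|^{2}|z_2|^{2}$ we get the relation $C^{2}+D^{2}=4AB$, and a transcendence-degree count (the orbit space is three-dimensional) shows this generates all relations, so $\mathcal{E}\cong\mathbb{R}[A,B,C,D]/(C^{2}+D^{2}-4AB)$.

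For (2) note that $R$ induces, on invariants, the swap $A\leftrightarrow B$ while fixing $C$ and $D$ (because $2z_2z_1=2z_1z_2$). Passing to the generators $N=A+B$, $\delta=A-B$ we have $\mathbb{R}[A,B,C,D]=\mathbb{R}[N,\delta,C,D]$, and $4AB=N^{2}-\delta^{2}$ turns the relation into $\delta^{2}=N^{2}-C^{2}-D^{2}$ --- which is exactly the asserted identity (it also follows at once from $N^{2}-\delta^{2}=(2|z_1|^{2})(2|z_2|^{2})=|2z_1z_2|^{2}$). Therefore
\[
\mathcal{E}\;\cong\;\mathbb{R}[N,C,D][\delta]\big/\bigl(\delta^{2}-(N^{2}-C^{2}-D^{2})\bigr),
\]
which is a free $\mathbb{R}[N,C,D]$-module with basis $\{1,\delta\}$; here $\mathbb{R}[N,C,D]$ really embeds because the quadratic form $C^{2}+D^{2}-4AB$ has full rank $4$, so $\mathcal{E}$ is an integral domain and $\delta$ is not a zero divisor. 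Since $R$ fixes $N,C,D$ and sends $\delta\mapsto-\delta$, the $(+1)$-eigenspace is exactly $\mathbb{R}[N,C,D]=\mathcal{E}_+$ and the $(-1)$-eigenspace is $\mathcal{E}_+\cdot\delta=\mathcal{E}_-$, proving (2).

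For (3), the inclusion $O(\mathbb{C}^2)\subseteq\{N\ge 0,\ N^{2}\ge C^{2}+D^{2}\}$ is immediate from $N=|z_1|^{2}+|z_2|^{2}\ge 0$ and $N^{2}-C^{2}-D^{2}=\delta^{2}\ge 0$. For the converse, given $(N,C,D)$ with $N\ge 0$ and $N^{2}\ge C^{2}+D^{2}$, put $\delta:=\sqrt{N^{2}-C^{2}-D^{2}}\in[0,N]$ and define $r_1,r_2\ge 0$ by $r_1^{2}=\tfrac12(N+\delta)$, $r_2^{2}=\tfrac12(N-\delta)$; then $4r_1^{2}r_2^{2}=N^{2}-\delta^{2}=C^{2}+D^{2}=|C+iD|^{2}$, so choosing $z_j=r_je^{i\alpha_j}$ with $\alpha_1+\alpha_2=\arg(C+iD)$ (arbitrary phases when $C+iD=0$) gives $|z_1|^{2}+|z_2|^{2}=N$ and $2z_1z_2=C+iD$. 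Hence $O$ maps onto the solid cone, which is the stated image, the convention $N\ge 0$ being automatic since $N$ is a sum of squares.

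None of the steps is deep; the only places that need genuine care are the completeness of the monomial reduction in (1) --- equivalently, that $N,C,D$ are algebraically independent in $\mathcal{E}$ and that the single quadric $C^2+D^2-4AB$ exhausts the relations --- and the bookkeeping in (2) that confirms $\mathcal{E}_-$ is a \emph{free} rank-one $\mathcal{E}_+$-module rather than merely being generated by $\delta$.
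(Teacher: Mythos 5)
Your proof is correct and follows exactly the standard invariant-theoretic computation that the paper itself invokes without detail (it simply cites the Golubitsky--Stewart--Schaeffer machinery): the weight count for the $S^1$-action with weights $(1,-1)$, the change of generators $N=A+B$, $\delta=A-B$ with the single relation $\delta^2=N^2-C^2-D^2$ giving the eigenspace splitting, and the explicit construction of a preimage for each point of the cone. Your observation that the image of the orbit map should carry the additional condition $N\geq 0$ (one nappe of the solid cone) is a correct minor sharpening of the statement as printed, and your appeal to the Schwarz-type theorems to pass from polynomial to smooth (anti-)invariants is exactly what the later use of the lemma requires.
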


The proof of this lemma is by standard algebraic computations, similar to those found for example in \cite{b11}.

Now we can apply Lemma \ref{l2} to our Hamiltonian. The function $h$ is $S^1$-invariant, $R$ anti-invariant and real valued. This implies there is a smooth function $g$ satisfying
\begin{equation}
\label{no45}
h(z_1,z_2,\tau) =\delta \ g(N,C,D,\tau).
\end{equation}

In order to find the periodic solutions we need to solve the bifurcation equation first. The bifurcation equation is given by
\begin{equation*}
\nabla _z h=0
\end{equation*}
This can be written as
\begin{equation} \label{SR bif eqn}
\left\{\begin{array}{rcccl}
\displaystyle\frac{\partial h}{\partial z_1} &=& \displaystyle\bar{z_1} g + \delta \frac{\partial g}{\partial z_1} &=& 0,\\[10pt]
\displaystyle\frac{\partial h}{\partial z_2} &=& \displaystyle - \bar{z_2}g + \delta \frac{\partial g}{\partial z_2} &=& 0.
\end{array}\right.
\end{equation}

We now consider, in turn, the symmetric and non-symmetric periodic orbits.

\subsection{Symmetric Periodic Orbits}

In finding symmetric periodic orbits we recover the result in \cite{b1}.
\begin{thm}[Buzzi \& Lamb \cite{b1}]
 Consider a symmetric equilibrium $0$ of a reversible Hamiltonian vector field $f$ with the reversing involution acting symplectically. Suppose that $Df(0)$ has two purely imaginary pairs of eigenvalues $\pm i$ with no other eigenvalues of the form $\pm ki,k \in \mathbb{Z}$. Then, the equilibrium is contained in a three-dimensional flow invariant conical subspace, given by the equation $\delta=0$, and generically this consists of a two-parameter family of symmetric periodic solutions whose period tends to $2\pi$ as they approach the equilibrium.
\end{thm}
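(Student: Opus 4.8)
The plan is to combine the Liapunov--Schmidt reduction of Section~4 with the factorisation $h(z_1,z_2,\tau)=\delta\, g(N,C,D,\tau)$ of \eqref{no45}, and to show that the whole cone $\{\delta=0\}$ is filled, for a suitable choice of $\tau$, with critical points of $h(\cdot,\tau)$ --- these being exactly the critical points carrying $R$-symmetry.

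First I would describe the $R$-symmetric orbits group-theoretically. A loop is $R$-symmetric, $Rx(\theta-t)=x(t)$, precisely when it is fixed by $T_{(R,\theta)}$ for some $\theta\in S^1$; on $\ker\mathcal{L}\cong\mathbb{C}^2$ the element $(R,\theta)$ acts by $(z_1,z_2)\mapsto(e^{-i\theta}z_2,\,e^{i\theta}z_1)$, whose fixed subspace is $\{(z_1,e^{i\theta}z_1):z_1\in\mathbb{C}\}$. Letting $\theta$ run over $S^1$, the union of these subspaces is precisely $\{|z_1|=|z_2|\}=\{\delta=0\}$, a three-dimensional cone with vertex $0$ on which $S^1$ acts freely away from the vertex. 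Since the solution $W(v,\tau)$ of the auxiliary equation is $\widetilde G$-equivariant, the full solution $v+W(v,\tau)$ inherits whatever symmetry $v$ has, so it is enough to produce the periodic orbits with $v$ ranging over this cone.

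Next I would restrict the bifurcation equation \eqref{SR bif eqn} to $\{\delta=0\}$. There the $\delta\,\partial_{z_j}g$ terms drop out and the system collapses to $\bar z_1\,g=0$ and $-\bar z_2\,g=0$; since $z_1,z_2\neq0$ at every nonzero point of the cone, this amounts to the single scalar equation $g(N,C,D,\tau)=0$. Now at $\tau=0$ the bifurcation map vanishes to second order on $\ker\mathcal{L}$ (each kernel element already solves the unperturbed linearised equation), so $h(\cdot,0)=O(|z|^3)$ and hence $g(0,0,0,0)=0$; moreover the quadratic, $\tau$-linear part of $h$ is a nonzero multiple of $\tau\delta$, because the $\tau$-linear part of $\Phi$ reduced to $\ker\mathcal{L}$ is $\tau$ times the infinitesimal $S^1$-action, whose Hamiltonian is a multiple of $\delta$. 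Therefore $\partial_\tau g(0,0,0,0)\neq0$ --- this, together with the fact that $\ker\mathcal{L}$ is exactly four-dimensional, is where the non-resonance hypothesis on $Df(0)$ enters --- and the implicit function theorem solves $g(N,C,D,\tau)=0$ uniquely for $\tau=\tau(N,C,D)$ near the origin, with $\tau(0,0,0)=0$.

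Finally I would assemble the conclusion. For each nonzero $v$ in the cone close to $0$, taking $\tau=\tau(N(v),C(v),D(v))$ makes $v$ a zero of the bifurcation equation, hence $v+W(v,\tau)$ a $2\pi/(1+\tau)$-periodic orbit of $f$, which is $R$-symmetric by the first step and whose period tends to $2\pi$ as $v\to0$ (since then $\tau\to0$). Dividing out the free $S^1$-action turns the three-dimensional cone into a two-parameter family of distinct symmetric periodic orbits, and their union together with the equilibrium is the flow-invariant three-dimensional conical subspace $\{\delta=0\}$ --- flow-invariant simply because it is a union of periodic orbits and the fixed point $0$. I expect the one genuine obstacle to be the verification in the third paragraph that $g$ vanishes at the origin while $\partial_\tau g$ does not, which means tracking the $\tau$-dependence of the reduced Hamiltonian through the Liapunov--Schmidt reduction and tying it to the non-resonance condition; once that is in hand, the rest is the routine cone-and-$S^1$-quotient bookkeeping.
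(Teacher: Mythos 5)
Your proposal is correct and follows essentially the same route as the paper: the factorisation $h=\delta\,g(N,C,D,\tau)$, the observation that $g=\tfrac{\tau}{2}+\hbox{h.o.t.}$ so $\partial_\tau g(0)\neq0$, and the implicit function theorem to solve $g=0$ for $\tau$, giving symmetric periodic orbits filling the cone with period tending to $2\pi$. The only (cosmetic) difference is that you solve the bifurcation equation on the whole cone $\{\delta=0\}$, identified as the union of the spaces $\mathrm{Fix}\,T_{(R,\theta)}$, whereas the paper solves it on $\mathrm{Fix}\,R$ and then uses the reversibility and a dimension count to sweep out the cone.
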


\begin{proof}
Since the Hamiltonian is $R$ anti-invariant then all symmetric solutions are zeros of the bifurcation equations that lie in the level set $h=0$.
For symmetric solutions we have $\delta =0$. Therefore the bifurcation equation calculated in $\mathrm{Fix} R=\{(z,z)\mid z\in \mathbb{C}\}$ will take the form
\begin{equation*}
\bar z g(z,\tau) =0
\end{equation*}
Non-zero solutions yield $g(z,\tau)=0$. By the formula of the reduced  Hamiltonian \eqref{no45}, the lowest order term of the variable $\tau$ is given by
\begin{equation*}
h=(|z_1|^2-|z_2|^2)\frac{\tau}{2}+h.o.t.
\end{equation*}
This implies  that $\displaystyle{\frac{\partial g}{\partial \tau}}(0,0)=\frac{1}{2}\neq 0$. By the implicit function theorem for each small non-zero $z$ there exists a $\tau$ such that $(z,z)$ lies in a periodic orbit with period $\frac{2\pi}{\tau +1}$. By reversing property each $R$ symmetric solution intersects $\mathrm{Fix}R$ in two points. Since the conical subspace $\delta=0$ is 3 dimensional and all points in $\mathrm{Fix} R$ are solutions of the bifurcation equation we conclude that the conical subspace completely consists of these periodic solutions with period close to $2\pi$ as they approach the origin.
\end{proof}

\subsection{Non-Symmetric Periodic Orbits }
We prove the existence of two families of non-symmetric periodic solutions under suitable conditions on the coefficients of the Hamiltonian. This result is fairly different to the one in \cite{b1}.
To prove the existence of non-symmetric solutions one needs to solve the bifurcation equation without any symmetry conditions. By calculating the  partial derivatives of $g$ the bifurcation equation will be
\begin{align*}
\frac{\partial h}{\partial z_1} &= \bar{z_1}(g+ \delta g_N)+ z_2 \delta (g_C -ig_D)=0\\
\frac{\partial h}{\partial z_2} &= \bar{z_2}(-g+ \delta g_N)+ z_1\delta (g_C -ig_D)=0
\end{align*}
where $g_N=\displaystyle\frac{\partial g}{\partial N}$, $g_C=\displaystyle\frac{\partial g}{\partial C}$ and $g_D=\displaystyle\frac{\partial g}{\partial D}$.
 Multiplying the first equation by $z_1$ and  the second one by $z_2$ we get
\begin{align}
\label{no1}
 |z_1|^2(g+ \delta g_N)+ z_1z_2 \delta (g_C -ig_D)&=0\\
\label{no2}
 |z_2|^2(-g+ \delta g_N)+ z_1z_2\delta (g_C -ig_D)&=0
\end{align}
By adding \eqref{no1} and \eqref{no2} we have
\begin{equation}
\label{no3}
\delta (g + N g_ N+(C+iD)(g_C -ig_D))=0
\end{equation}
Taking  the imaginary part  of the above equation gives
\begin{equation*}
D g_C - C g_D=0
\end{equation*}
and when $C,D\neq 0$ we can write that equation as
\begin{equation}
\label{no4}
\frac{g_C}{C}=\frac{g_D}{D}
\end{equation}
Therefore equation \eqref{no3} will be
\begin{equation}
\label{no6}
\delta (g + N g_ N+ C g_C + D g_D )=0
\end{equation}
 By subtracting \eqref{no2} from \eqref{no1} we have
\begin{equation}
N g+\delta^2 g_N=0
\end{equation}
this can also be written by the formula
\begin{equation}
\label{no5}
\frac{g}{\delta^2}=-\frac{g_N}{N}
\end{equation}
Substituting \eqref{no4} and \eqref{no5} in \eqref{no6} yields
\begin{equation}
\label{no7}
\frac{g_N}{N}=-\frac{g_C}{C}
\end{equation}
Thus
\begin{equation}
\frac{g}{\delta^2} = -\frac{g_N}{N}=\frac{g_C}{C}=\frac{g_D}{D}
\end{equation}
which is equivalent to
\begin{equation}
\label{no14}
 \frac {N}{g_N}=-\frac {C}{g_C}=-\frac{D}{g_D},
\end{equation}

In order to prove the existence of non-symmetric periodic solutions to the original Hamiltonian system we need to prove the following lemma. Let \[g_N(0)=n, g_C(0)=c, g_D(0)=d.\]
\begin{lem}
\label{l3}
If $n,c$ and $d$ are not all zero then there exists a unique solution in $\mathbb{R}^4\cong(\tau,N,C,D)$-space  for the system of equations
\begin{align}
\label{no10}
g + N g_ N+ C g_C + D g_D&=0\\
\label{no11}
N g_C+C g_N&=0\\
\label{no12}
D g_C-C g_D&=0\\
\label{no13}
N g_D+D g_N&=0.
\end{align}
\end{lem}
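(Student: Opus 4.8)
The plan is to treat \eqref{no10}--\eqref{no13} as a perturbation of its linear part, using the normalisation $g_\tau(0)=\tfrac12$ recorded in the proof for the symmetric orbits to eliminate $\tau$. Write $F_1,\dots,F_4$ for the left-hand sides of \eqref{no10}--\eqref{no13}; since $g=\tfrac{\tau}{2}+\text{h.o.t.}$, every $F_i$ vanishes at the origin. Because $\partial F_1/\partial\tau=g_\tau+\text{h.o.t.}$ takes the value $\tfrac12$ there, the implicit function theorem solves \eqref{no10} uniquely for $\tau=\tau(N,C,D)$ near $0$. Since the remaining $F_2,F_3,F_4$ have vanishing $\tau$-derivative at the origin, this substitution reduces the problem, without altering the relevant linear part, to the three equations \eqref{no11}--\eqref{no13} in the invariants $(N,C,D)$, and the whole content of the lemma lies in analysing this reduced system.

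First I would linearise \eqref{no11}--\eqref{no13} at the origin. With $g_N(0)=n$, $g_C(0)=c$, $g_D(0)=d$ the Jacobian is
\[
  M=\begin{pmatrix} c & n & 0\\ 0 & -d & c\\ d & 0 & n\end{pmatrix}.
\]
The decisive algebraic fact, which I would verify by direct expansion, is the identity $\det M\equiv 0$. Thus $M$ is always singular: this is a \emph{structural} degeneracy, not an accident, reflecting the scaling (amplitude) freedom any family of periodic orbits must carry. Under the hypothesis that $n,c,d$ are not all zero I would then check that $M$ has rank exactly $2$ (whenever $(n,c,d)\neq 0$ some $2\times2$ minor of $M$ is non-zero), so that $\ker M$ is one-dimensional and spanned by $(N,C,D)=(-n,c,d)$. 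Hence the reduced system forces the solution into a single distinguished direction; the hypothesis is used precisely to exclude $M\equiv 0$, where this direction would be undetermined.

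To pass from a unique direction to a unique solution I would exploit that, after eliminating $\tau$, each of \eqref{no11}--\eqref{no13} is homogeneous of degree one to leading order, so its zero set is a scaling family determined once the amplitude is fixed. Factoring out this scaling---equivalently, seeking the solution direction in $\mathbb{R}^4$---the leading system has the single solution
\[
  (\tau,N,C,D)\ \propto\ \bigl(-4(c^2+d^2-n^2),\,-n,\,c,\,d\bigr),
\]
the $\tau$-component being the value forced by \eqref{no10} along the kernel direction. Because $\ker M$ is one-dimensional this solution is simple, and I would confirm the higher-order corrections create no second one: the single left-dependence $-d\,F_2-n\,F_3+c\,F_4\equiv0$ among the leading parts shows that \eqref{no11}--\eqref{no13} impose only two independent conditions transverse to the scaling direction, which together with \eqref{no10} cut out exactly one solution. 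I would then record that along this solution $\delta^2=N^2-C^2-D^2$ is proportional to $n^2-c^2-d^2$; its sign is the quantity that, via \eqref{no5} and the orbit-space constraint of Lemma \ref{l2}, decides in the next step whether the solution is realised by two families of orbits or by none.

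The step I expect to be the main obstacle is the degeneracy $\det M=0$ itself. It prevents any direct appeal to the implicit function theorem for four equations in four unknowns and forces the two-stage argument above: one must first extract the unique admissible direction from the rank-two kernel, and then argue, using the homogeneity and the explicit linear relation among the leading parts of $F_2,F_3,F_4$, that no further solution is introduced at higher order. Keeping careful track of which combinations of the equations are genuinely independent---so that ``unique solution'' is established rather than merely ``unique tangent direction''---is where the care is required.
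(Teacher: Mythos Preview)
Your approach differs from the paper's in the order of operations, and this reordering creates the very difficulty you flag at the end. The paper does not eliminate $\tau$ first; instead it observes at the outset that the last three equations satisfy an \emph{exact} functional identity,
\[
D\,(Ng_C+Cg_N)\;-\;N\,(Dg_C-Cg_D)\;-\;C\,(Ng_D+Dg_N)\;\equiv\;0,
\]
so one of \eqref{no11}--\eqref{no13} may simply be discarded. Keeping (for $n\neq0$) the three equations \eqref{no10}, \eqref{no11}, \eqref{no13}, the Jacobian with respect to $(\tau,C,D)$ at the origin is
\[
X=\begin{pmatrix}\tfrac12 & c & d\\[2pt] 0 & n & 0\\[2pt] 0 & 0 & n\end{pmatrix},
\]
which is non-singular; the implicit function theorem yields directly a unique curve $(\tau,C,D)=S(N)$, and the omitted equation holds along it by the syzygy above. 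The cases $n=0$ are handled by permuting which equation is dropped and which variable is the curve parameter. The degeneracy $\det M=0$ that occupies most of your outline never appears, because the redundant equation has been removed before differentiating.

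Your route can be completed, but the argument you sketch for upgrading ``unique tangent direction'' to ``unique curve'' is not yet a proof. The relation $-d\,F_2-n\,F_3+c\,F_4=0$ you invoke holds only for the \emph{linear} parts; beyond leading order the equations are neither homogeneous nor constantly dependent, so an appeal to homogeneity does not control the higher-order terms. What actually closes the gap is exactly the identity above with variable coefficients $(D,-N,-C)$: having chosen two of \eqref{no11}--\eqref{no13} whose $2\times2$ Jacobian in two of $(N,C,D)$ is non-singular (your rank-two computation), the implicit function theorem produces a curve, and the exact syzygy forces the third equation to vanish along it (for $N\neq0$, hence everywhere by continuity). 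So your approach ultimately relies on the same key observation the paper uses from the start; the preliminary elimination of $\tau$ is a detour. What your presentation does buy is the explicit tangent direction $(N,C,D)\propto(-n,c,d)$ and the resulting value $\delta^2\propto n^2-c^2-d^2$, which feeds nicely into the next theorem.
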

\begin{proof}
 It is clear that the last three equations are not independent but we will use them all to make up for the special cases when one of the numbers $n,c$ or $d$ is equal to zero. Suppose that $n\neq 0$. Then we only need to solve \eqref{no10},\eqref{no11} and \eqref{no13}. In order to apply the implicit function theorem we need to study the following Jacobian matrix with respect to $\tau,C,D$ and $N$
 \[
\left(
\begin{array}{ccc|c}
 \frac{1}{2}&c&d&n\\
 0&n&0&c\\
 0&0&n&d\\
  \end{array}
   \right) =
   \left(
 \begin{array}{c|c}
 X&Y
  \end{array}
 \right)
 \]
 Since $n\neq 0$ then the matrix $X$ is non-singular. Therefore by the implicit function theorem there exists a unique curve $S=S(N)$, with $dS(0)=-X^{-1} Y$, that solves the system. If $n=0$ but $c\neq0$ we can choose equations \eqref{no10},\eqref{no11} and \eqref{no12}. Solving by the implicit function theorem gives a unique solution $S=S(C)$. A similar argument can be used for the remaining cases.

\end{proof}
Now we state and prove the main theorem about the existence of non-symmetric periodic solutions for the given reversible Hamiltonian system.

\begin{thm}
\label{th1}
Suppose that $n^2 \neq c^2+ d^2$, then there exist  the symmetric Liapunov centre families of periodic  solutions filling the set $\delta=0$ described before. Moreover,
\begin{enumerate}
\item [i)] If $n^2>c^2+d^2 $ then there exists two families of non-symmetric periodic orbits for the Hamiltonian system distinguished by the sign of $\delta$. The period of the periodic solutions converges to $2\pi$ as the solutions tend to the origin.
\item [ii)] If $n^2<c^2+d^2 $ then the only periodic orbits with period close to $2\pi$ in a neighbourhood of the origin are the symmetric ones.

\end{enumerate}
\end{thm}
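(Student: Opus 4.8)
\emph{Plan of proof.} The claim about the symmetric families is not new: it is exactly the theorem of Buzzi and Lamb recalled above, which holds under the standing non-resonance hypotheses and does not use $n^2\ne c^2+d^2$. So the plan is to concentrate on the non-symmetric orbits. The starting point is the reduction carried out before the statement: a non-symmetric periodic orbit of period near $2\pi$ is the same thing as a zero of the bifurcation equation $\nabla_z h=0$ with $\delta\ne0$, and, since the $S^1$-action on $\mathbb{C}^2$ is free away from the origin, such orbits correspond bijectively to points $(\tau,N,C,D)$ with $N>0$ and $\delta^2:=N^2-C^2-D^2>0$ that solve the polynomial system \eqref{no10}--\eqref{no13} of Lemma~\ref{l3}. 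First I would pin down this equivalence carefully: one direction is the chain of manipulations leading from \eqref{no1}--\eqref{no2} to \eqref{no14}, and for the converse one reconstructs $(z_1,z_2)$ from $(N,C,D,\delta)$ and verifies \eqref{SR bif eqn} directly, using $N\ne0$. Observe that $n^2\ne c^2+d^2$ forces $(n,c,d)\ne(0,0,0)$, so Lemma~\ref{l3} applies and yields a \emph{unique} solution curve $\gamma$ through the origin.

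The heart of the argument is to determine, for small points of $\gamma$, the sign of $\delta^2$. From \eqref{no45} and the computation of the quadratic part of $h$ one has $g=\tfrac12\tau+nN+cC+dD+(\text{higher order})$ near the origin, so in particular $g_N(0)=n$, $g_C(0)=c$, $g_D(0)=d$. Feeding this into the reduced relations---in the form \eqref{no14}, namely $N/g_N=-C/g_C=-D/g_D$, when $n\ne0$, or into the polynomial equations \eqref{no11}--\eqref{no13} when $n=0$---and applying the implicit function theorem gives, along the branch of $\gamma$ parametrised by $N$ (the case $n\ne0$),
\[
\delta^2 = N^2-C^2-D^2 = \frac{n^2-c^2-d^2}{n^2}\,N^2 + O(N^3),
\]
and along the branch parametrised by $C$ or $D$ (the case $n=0$, which forces $c^2+d^2>0$, i.e. $n^2<c^2+d^2$) one gets $\delta^2<0$ to leading order. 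In every case $\delta^2$ has the sign of $n^2-c^2-d^2$ on $\gamma$ near the origin, and $\tau\to0$ there.

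Granting this, the dichotomy is immediate. If $n^2>c^2+d^2$, then for each small $N>0$ we have $\delta^2|_{\gamma(N)}>0$, so the two choices $\delta=\pm\sqrt{N^2-C^2-D^2}$ produce two genuine points of $\mathbb{C}^2$ modulo $S^1$, i.e. two periodic orbits, with period $2\pi/(1+\tau(N))\to2\pi$ as $N\to0$; letting $N$ run over a small interval yields two $2$-dimensional families, which are interchanged by $R$ (acting here by $\delta\mapsto-\delta$) and contain no $R$-symmetric orbit because $\delta\ne0$ on them. Since $\gamma$ is the \emph{entire} zero set of \eqref{no10}--\eqref{no13}, there are no further non-symmetric orbits, giving case (i). If $n^2<c^2+d^2$, then $\delta^2|_\gamma<0$ near the origin, which is incompatible with $\delta^2=N^2-C^2-D^2\ge0$ on the image of the orbit map (Lemma~\ref{l2}(3)); hence $\gamma$ carries no point of $\mathbb{C}^2$ besides the origin, and the only periodic orbits of period close to $2\pi$ are the symmetric ones filling $\{\delta=0\}$, giving case (ii).

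\emph{Main obstacle.} The content is concentrated in the leading-order sign of $\delta^2$ along $\gamma$: beyond assembling the implicit-function derivatives, the delicate point is to check, uniformly across the various parametrisations that occur in Lemma~\ref{l3} ($n=0$, and $c=0$ or $d=0$ within case (i)), that the outcome is always $\operatorname{sign}(n^2-c^2-d^2)$, with no spurious sign introduced by a change of chart. A secondary, bookkeeping matter is the precise equivalence between zeros of $\nabla_z h$ with $\delta\ne0$ and solutions of the polynomial system with $N^2>C^2+D^2$---in particular that degenerate sub-loci ($C$ or $D$ vanishing, or one of $z_1,z_2$ vanishing) neither create extra solutions nor lose the ones found---together with the remark that freeness of the $S^1$-action makes each such solution a single periodic orbit.
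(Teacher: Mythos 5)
Your proposal is correct and follows essentially the same route as the paper: reduce to the system \eqref{no10}--\eqref{no13}, invoke Lemma~\ref{l3} (noting $n^2\neq c^2+d^2$ forces $(n,c,d)\neq(0,0,0)$) to get a unique solution curve, and read off the sign of $\delta^2=N^2-C^2-D^2$ along it, which to leading order is that of $n^2-c^2-d^2$. The only cosmetic difference is that the paper parametrises the curve by $t$ from \eqref{no14}, writing $\delta^2=(g_N^2-g_C^2-g_D^2)t^2$, whereas you parametrise by $N$ (or $C$, $D$ in the degenerate charts), which is the same computation.
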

\begin{proof}
To prove the existence of non-symmetric periodic orbits we have to solve the equations \eqref{no10},\eqref{no11},\eqref{no12} and \eqref{no13}. By the condition $n^2 \neq c^2+ d^2$ we have that $n,c$ and $d$ cannot all be zero. Applying Lemma \ref{l3} we have a unique solution for those equations. Therefore, we can write
\begin{equation}
 \frac {N}{g_N}=-\frac {C}{g_C}=-\frac{D}{g_D}=t,
\end{equation}
which is equivalent to $N= g_N t, C=-g_C t$ and $D=-g_D t$. To get non-symmetric solutions we should have $\delta^2=N^2-C^2-D^2 >0$. This implies
\begin{equation*}
({g_N}^2-{g_C}^2-{g_D}^2 )t^2>0, \ \text{for}\ t\neq0,
\end{equation*}
and therefore, ${g_N}^2-{g_C}^2-{g_D}^2>0$. Taking the limit at the origin gives $n^2\geq c^2+d^2 $.
We conclude that non-symmetric solutions exist when $n^2>c^2+d^2 $ and split into two families according to $\delta$ being positive or negative. On the other hand, when $n^2< c^2+d^2 $ the only periodic orbits with period close to $2\pi$ in a neighbourhood of the origin are the symmetric ones.
\end{proof}

\subsection{Period Distribution within the Family of Symmetric Periodic Solutions}
Following  the argument given in Buzzi and Lamb \cite{b1}, we describe the structure of period distribution for symmetric periodic solutions. According to $\mathrm{Fix} R$ being two dimensional the level sets of the period will be given by $\tau=\tau(x,y)$. If we change the coordinates in a neighbourhood of the origin such that $\tau=\varepsilon_1 \tilde{x}^2 + \varepsilon_2 \tilde{y}^2$ with $\varepsilon_j =\pm 1$,where the sign depends on the details of $h$ and $H$. One can give the following definition:
\begin{dfn}
The level sets of the period $\tau$  can be of two types:
\begin{enumerate}
\item \emph{elliptic} when $\varepsilon_1 \varepsilon_2=1$. In that case the level sets of the period form circles and $\tau$ increases or decreases monotonically with increasing radius.
\item \emph{hyperbolic} when $\varepsilon_1 \varepsilon_2=-1$. Here the level sets of the period form two families of hyperbolae, one family with positive increasing $\tau$ and one with negative decreasing $\tau$.
\end{enumerate}
\end{dfn}

Now we can prove the following proposition:

\begin{prop}
Depending on the quartic terms of the Hamiltonian function (and quadratic terms of the function $g$), among the three dimensional surface of symmetric periodic solutions near the equilibrium point, then the  level sets of $\tau$ are  elliptic when $n^2>c^2+d^2$ or hyperbolic when $n^2<c^2+d^2$.
\end{prop}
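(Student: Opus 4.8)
The plan is to connect the period function $\tau$ restricted to $\mathrm{Fix}\,R$ with the same quantities $n=g_N(0)$, $c=g_C(0)$, $d=g_D(0)$ that appeared in Theorem \ref{th1}, and then read off the sign of $\varepsilon_1\varepsilon_2$ from the Hessian of $\tau$ at the origin. On $\mathrm{Fix}\,R = \{(z,z)\}$ we have $\delta = 0$, so by \eqref{no45} the bifurcation/Hamiltonian reduces to $h = \delta\,g(N,C,D,\tau)$ with $\delta = 0$, and the nontrivial part of the symmetric bifurcation equation from the earlier theorem is $g(N,C,D,\tau) = 0$. This equation implicitly defines $\tau = \tau(N,C,D)$ for small argument, since $\partial g/\partial\tau(0)=\tfrac12\neq 0$ (established in the proof of the Buzzi--Lamb theorem). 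On $\mathrm{Fix}\,R$ one has $z_1=z_2=z$, so $N = 2|z|^2$ and $C+iD = 2z^2$; writing $z = x+iy$ this gives $N = 2(x^2+y^2)$, $C = 2(x^2-y^2)$, $D = 4xy$, so $(C,D)$ is (twice) the image of $z^2$ and $C^2+D^2 = N^2$ identically on $\mathrm{Fix}\,R$. Thus $\tau$ as a function on $\mathrm{Fix}\,R$ is the pullback of $\tau(N,C,D)$ under this quadratic map.

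The key computation is then the leading (quadratic in $(x,y)$) term of $\tau$. Differentiating $g(N,C,D,\tau)=0$ and using $g(0)=0$, $g_\tau(0)=\tfrac12$, one gets $\tau = -2\big(g_N(0)N + g_C(0)C + g_D(0)D\big) + \text{h.o.t.} = -2\big(nN + cC + dD\big) + \cdots$ as functions of $(N,C,D)$ near $0$. Substituting the quadratic expressions for $N,C,D$ in terms of $(x,y)$, the leading term of $\tau$ on $\mathrm{Fix}\,R$ is the quadratic form
\begin{equation*}
\tau(x,y) = -2\Big( n\cdot 2(x^2+y^2) + c\cdot 2(x^2-y^2) + d\cdot 4xy \Big) + \text{h.o.t.}
= -4\big((n+c)x^2 + 2d\,xy + (n-c)y^2\big) + \cdots.
\end{equation*}
The type of the level sets of $\tau$ is determined by the sign of the discriminant of this quadratic form: it is elliptic (definite form, circular level sets after rescaling) when $(n+c)(n-c) - d^2 > 0$, i.e. $n^2 - c^2 - d^2 > 0$, and hyperbolic (indefinite form) when $n^2 - c^2 - d^2 < 0$. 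After a linear change of coordinates diagonalizing the quadratic form one obtains $\tau = \varepsilon_1\tilde x^2 + \varepsilon_2\tilde y^2 + \text{h.o.t.}$ with $\varepsilon_1\varepsilon_2 = \operatorname{sign}(n^2-c^2-d^2)$, which is exactly the dichotomy in the statement; monotonic dependence on the radius in the elliptic case and the two families of hyperbolae in the hyperbolic case then follow from the definition.

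The main obstacle, such as it is, is bookkeeping rather than conceptual: one must be careful that the relevant ``quadratic terms of $g$'' (equivalently quartic terms of $H$) enter only through $n,c,d$ at leading order, and that the quadratic reparametrization $z\mapsto z^2$ does not change the type — it doesn't, because $z\mapsto z^2$ is a local diffeomorphism away from $0$ and a branched double cover near $0$ that pulls a definite form back to a definite form and an indefinite form back to an indefinite form (the pulled-back form $(n+c)x^2+2dxy+(n-c)y^2$ has the same discriminant sign as $n^2-c^2-d^2$, as computed above). I would also remark that this makes the Buzzi--Lamb elliptic/hyperbolic dichotomy coincide exactly with the existence/non-existence of non-symmetric families from Theorem \ref{th1}, which is the point anticipated in the introductory discussion of Section 5. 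A one-line check that the degenerate case $n^2 = c^2+d^2$ is excluded (it is, by the hypothesis $n^2\neq c^2+d^2$ inherited from the surrounding discussion) completes the argument.
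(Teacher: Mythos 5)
Your proposal is correct and follows essentially the same route as the paper: solve $g(N,C,D,\tau)=0$ for $\tau$ via the implicit function theorem using $g_\tau(0)=\tfrac12$, substitute $N=2(x^2+y^2)$, $C=2(x^2-y^2)$, $D=4xy$ on $\mathrm{Fix}\,R$, and read the elliptic/hyperbolic dichotomy from the sign of the discriminant $n^2-c^2-d^2$ of the resulting quadratic form (the paper invokes the Morse Lemma for the normal form $\varepsilon_1\tilde x^2+\varepsilon_2\tilde y^2$). The only discrepancy is the sign of the cross term $d\,xy$ (the paper writes $-4d\,xy$), which does not affect the discriminant or the conclusion.
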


\begin{proof}
As discussed in the proof of the existence of symmetric periodic solutions, $\tau(x,y)$ can be calculated using the equation $g(z,\tau)=0$, with $z=x+iy$. Using our variables $N,C$ and $D$ and depending on the quadratic terms of that equation we have
\begin{align*}
g(N,C,D,\tau)&=0\\
nN+cC+dD+....&=-\frac{\tau}{2}\\
2n(x^2+y^2)+2c(x^2-y^2)-4d(xy)&=-\frac{\tau}{2}
\end{align*}

By the Morse Lemma the shape of $\tau(x,y)$ near the origin is given by the determinant
\begin{equation*}
 D=4^2(n^2-c^2-d^2).
 \end{equation*}
Therefore the family of periodic orbits is elliptic when $(n^2-c^2-d^2)>0$ or hyperbolic when $(n^2-c^2-d^2)<0$.
\end{proof}
Accordingly, one can easily deduce the following corollary.
\begin{cor}
The two dimensional families of non-symmetric periodic orbits given in Theorem $\ref{th1}$ exists if and only if the three dimensional family of symmetric periodic orbits is of elliptic type.
\end{cor}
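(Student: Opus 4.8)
The plan is to obtain the corollary by simply juxtaposing the two dichotomies already established in this section, both of which are governed by the sign of the single quantity $n^2-c^2-d^2$, where $n=g_N(0)$, $c=g_C(0)$, $d=g_D(0)$ are the second-order coefficients of the generating function $g$ of the reduced Hamiltonian. First I would recall Theorem~\ref{th1}: part (i) says that when $n^2>c^2+d^2$ there are exactly two families of non-symmetric periodic orbits (distinguished by the sign of $\delta$), while part (ii) says that when $n^2<c^2+d^2$ there are none with period close to $2\pi$ near the origin. So the \emph{existence} of the non-symmetric families is equivalent to $n^2>c^2+d^2$.

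Next I would invoke the preceding Proposition, which asserts precisely that on the three-dimensional family of symmetric periodic orbits the level sets of the period function $\tau$ are elliptic exactly when $n^2-c^2-d^2>0$ and hyperbolic exactly when $n^2-c^2-d^2<0$; this is read off from the Morse-lemma determinant $D=16(n^2-c^2-d^2)$ of the quadratic form $2n(x^2+y^2)+2c(x^2-y^2)-4dxy$ appearing in the expansion of $g(z,\tau)=0$ restricted to $\mathrm{Fix}\,R$. Thus the symmetric family being of \emph{elliptic type} is also equivalent to $n^2>c^2+d^2$. Chaining the two equivalences through this common inequality yields the claimed ``if and only if''.

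I do not expect any genuine obstacle here: the substantive work was already done in proving Theorem~\ref{th1} and the Proposition, and the corollary is the bookkeeping observation that the elliptic/hyperbolic alternative for the period function and the presence/absence of non-symmetric normal modes are controlled by one and the same coefficient combination. The only point needing a word of care is the borderline locus $n^2=c^2+d^2$, where neither statement applies; but this is exactly the non-generic case excluded by the standing hypothesis $n^2\neq c^2+d^2$, so under that assumption the dichotomy is exhaustive and the equivalence is immediate.
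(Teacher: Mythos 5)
Your argument is exactly the paper's intended deduction: the corollary follows by combining Theorem~\ref{th1} (non-symmetric families exist iff $n^2>c^2+d^2$) with the preceding Proposition (the symmetric family is elliptic iff $n^2>c^2+d^2$), both dichotomies being governed by the sign of $n^2-c^2-d^2$, with the borderline case excluded by genericity. This matches the paper's (implicit, ``easily deduced'') proof, so nothing further is needed.
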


\section{Anti-symplectic involution}
In this section  we analyse the problem of existence of periodic orbits in a Hamiltonian system which is equivariant under the action of an anti-symplectic involution $S$ (type AE in Table 1).  This was studied by J.~Li and Y.~Shi in \cite{b2}, but that paper contains a number of errors.  Firstly, the form of the Hamiltonian is not sufficiently general, for example the polynomial function $h= DN$ satisfies the symmetry of the problem but is not in the form assumed in \cite{b2}. This effects the results significantly and the general form of the Hamiltonian makes the calculations more difficult. There is also a serious error in the proof of their Lemma 5.3.  As a result we consider the problem anew.  We use a different basis from \cite{b2}, so the invariants and anti-invariants are different, and we determine a general formula for the reduced Hamiltonian. Firstly, we find that no symmetric periodic orbits can occur generically (opposite to the result claimed in \cite{b2}).  Secondly, we prove the existence of at least two  and at most 12 families of non-symmetric periodic solutions near the equilibrium point.

An immediate consequence of our assumptions is the Hamiltonian being $S$ anti-invariant (as pointed out in Table 1). By the normal forms given in Lemma \ref{l1}(ii) we have $\dim \ker \mathcal{L}=4$ i.e. $\ker \mathcal{L}\cong \mathbb{C}^2$. The bifurcation equation is given by the formula
\begin{align*}
\varphi:\mathbb{C}^2 \times \mathbb{R}\rightarrow \mathbb{C}^2\\
\varphi=2J \nabla_{z}h
\end{align*}
with the Hamiltonian
\begin{equation*}
h:\mathbb{C}^2 \times \mathbb{R}\rightarrow \mathbb{R},
\end{equation*}
where $J$ is the structure map. Now let's define the actions of $\mathbb{Z}_2^S \times S^1$ on $\mathbb{C}^2$ by \begin{align*}
S(z_1,z_2)&=(\bar{z_2},\bar{z_1})\\
\theta(z_1,z_2)&=(e^{i\theta}z_1, e^{-i\theta}z_2)
\end{align*}

Now we study the set of (anti-)invariants and find the appropriate   formula for $h$.
\begin{lem}
\label{lem2}
For $S^1\rtimes\mathbb{Z}_2^S$ acting on $\mathbb{C}^2$ as above, then
\begin{enumerate}
\item The $S^1\rtimes\mathbb{Z}_2$ invariant functions are generated by $N,C,D^2$ where
\[ N=|z_1|^2+ |z_2|^2 , C+iD= 2z_1 z_2 .\]
\item the $S^1$ invariant but $\mathbb{Z}_2$ anti-invariant functions are generated by $\delta,D$ where\[\delta=|z_1|^2- |z_2|^2.  \]
\end{enumerate}

\end{lem}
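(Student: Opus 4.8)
The plan is to describe the ring $\mathcal{E}$ of all $S^1$-invariant real polynomials on $\mathbb{C}^2$, realise it as a free module over a polynomial subring, and then split off the $\pm1$-eigenspaces of the residual $\mathbb{Z}_2^S$-action. For the $S^1$-part the action is a linear circle action, so a monomial $z_1^{a}\bar z_1^{b}z_2^{c}\bar z_2^{d}$ is $S^1$-invariant precisely when $a-b-c+d=0$; the monoid of such exponent vectors is generated by those of $|z_1|^2$, $|z_2|^2$, $z_1z_2$ and $\overline{z_1z_2}$, with the one syzygy $|z_1|^2|z_2|^2=z_1z_2\cdot\overline{z_1z_2}$. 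Hence, writing $A=|z_1|^2$, $B=|z_2|^2$, $C+iD=2z_1z_2$, the real ring of $S^1$-invariants is $\mathcal{E}=\mathbb{R}[A,B,C,D]$ subject to the single relation $C^2+D^2=4AB$ --- this is the standard computation alluded to after Lemma~\ref{l2}, cf.\ \cite{b11}. In the coordinates $N=A+B$, $\delta=A-B$ this relation becomes $\delta^2=N^2-C^2-D^2$; since it is monic of degree $2$ in $\delta$, $\mathcal{E}$ is free of rank $2$ on $\{1,\delta\}$ over $\mathbb{R}[N,C,D]$, which is in turn free of rank $2$ on $\{1,D\}$ over $\mathbb{R}[N,C,D^2]$. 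Thus $\mathcal{E}$ is a \emph{free} $\mathbb{R}[N,C,D^2]$-module with basis $\{1,\,D,\,\delta,\,\delta D\}$.

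Next I would record how $S$ acts on these generators. From $S(z_1,z_2)=(\bar z_2,\bar z_1)$ we get $A\leftrightarrow B$, so $N\mapsto N$ and $\delta\mapsto-\delta$, and $2z_1z_2\mapsto 2\bar z_2\bar z_1=\overline{2z_1z_2}$, so $C\mapsto C$ and $D\mapsto-D$; therefore $S$ fixes $\mathbb{R}[N,C,D^2]$ pointwise and permutes the module basis by $1\mapsto1$, $D\mapsto-D$, $\delta\mapsto-\delta$, $\delta D\mapsto\delta D$. Applying the projectors $\tfrac12(1\pm S)$ then gives
\[
\mathcal{E}_{+}=\mathbb{R}[N,C,D^2]\oplus\mathbb{R}[N,C,D^2]\,\delta D,
\qquad
\mathcal{E}_{-}=\mathbb{R}[N,C,D^2]\,D\oplus\mathbb{R}[N,C,D^2]\,\delta,
\]
which is the statement of the lemma: the invariant ring $\mathcal{E}_{+}$ is generated by $N$, $C$, $D^2$ (together with the degree-$4$ invariant $\delta D$, whose square $D^2(N^2-C^2-D^2)$ already lies in $\mathbb{R}[N,C,D^2]$), while the anti-invariant module $\mathcal{E}_{-}$ is freely generated over $\mathcal{E}_{+}$ by $\delta$ and $D$; the advertised identity $\delta^2=N^2-C^2-D^2$ is the only relation among $N,C,D,\delta$.

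The one step that needs genuine (if routine) work is showing that the listed functions actually \emph{generate}, rather than merely that they are invariant --- that is, establishing the free $\mathbb{R}[N,C,D^2]$-module structure of $\mathcal{E}$; everything after that is sign bookkeeping. Besides the normal-form reduction sketched above, this can be obtained from the corresponding statement for the $R$-action in Lemma~\ref{l2} (there $D$ is already an invariant) by a further $\mathbb{Z}_2$-splitting coming from the complex conjugation that is present in $S$ but absent from $R$, or directly from a Molien-series computation for $S^1\rtimes\mathbb{Z}_2^S$.
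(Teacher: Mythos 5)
Your argument is correct, and it supplies precisely the computation the paper leaves implicit: the paper gives no proof of this lemma (for the analogous Lemma~\ref{l2} it simply appeals to ``standard algebraic computations'' in the style of \cite{b11}), and your route --- weight count for the $S^1$-action, the four monomial generators $|z_1|^2,|z_2|^2,z_1z_2,\overline{z_1z_2}$ with the single relation $C^2+D^2=4AB$, then the change to $N,\delta$ and the observation that $\mathcal{E}$ is a free $\mathbb{R}[N,C,D^2]$-module on $\{1,D,\delta,\delta D\}$ on which $S$ acts diagonally --- is exactly that standard computation, carried out in full. In fact your statement is slightly sharper than the lemma as printed: the ring of $S^1\rtimes\mathbb{Z}_2^S$-invariants requires the additional generator $\delta D$ (with $(\delta D)^2=D^2(N^2-C^2-D^2)$), so item (1) of the lemma is literally incomplete; but, as you note, this does not affect the paper's use of the result, since your direct-sum decomposition $\mathcal{E}_-=\mathbb{R}[N,C,D^2]\,\delta\oplus\mathbb{R}[N,C,D^2]\,D$ shows that any $\delta D$-dependence of the coefficients can be reabsorbed, and it is exactly this decomposition that justifies the normal form $h=\delta\,g^1(N,C,D^2,\tau)+D\,g^2(N,C,D^2,\tau)$ used in Section~6. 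The only point you might flag explicitly is that the paper applies the lemma to the smooth reduced Hamiltonian, so strictly one invokes Schwarz's theorem (and its module version) to pass from the polynomial statement you prove to smooth germs; the paper glosses this as well, so it is a remark rather than a gap.
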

According to that the Hamiltonian $h$ will take the form
\begin{equation*}
h=\delta g^1(N,C,D^2,\tau)+ D g^2(N,C,D^2,\tau).
\end{equation*}
The bifurcation equation will be given by
\begin{align}
\label{no15}
\frac{\partial h}{\partial z_1} &= \bar{z_1} g^1+ \delta \frac{\partial g^1}{\partial z_1}-iz_2g^2+D \frac{\partial g^2}{\partial z_1}=0\\
\label{no16}
\frac{\partial h}{\partial z_2} &= -\bar{z_2} g^1+ \delta \frac{\partial g^1}{\partial z_2}-iz_1g^2+D\frac{\partial g^2}{\partial z_2}=0
\end{align}

\subsection{Symmetric Periodic Orbits}
Symmetric periodic solutions of that equivariant  Hamiltonian system lie in the set $\mathrm{Fix} S=\{(z,\bar{z}),z\in \mathbb{C}\}$. Moreover, by anti-invariance, that is $h\circ S=-h$, all symmetric solutions will be in the level set $h=0$. In order to get the symmetric periodic solutions we need to solve the bifurcation equation calculated in $\mathrm{Fix S}$. Consequently, one needs to solve \eqref{no15} and \eqref{no16}
with conditions: $\delta=D =0$ and $N=C$. Thus,
\begin{align}
\label{no17}
\bar{z_1}g^1-iz_2 g^2&=0\\
\label{no18}
-\bar{z_2}g^1-iz_1 g^2&=0.
\end{align}
By multiplying \eqref{no17} by $z_1$ and \eqref{no18} by $z_2$ we get
\begin{align}
|z_1|^2 g^1 -iz_1 z_2 g^2&=0\\
-|z_2|^2 g^1-iz_1 z_2 g^2&=0
\end{align}
Adding and subtracting these two equations yields
\begin{align*}
\delta g^1 -i(C+iD)g^2&=0\\
N g^1&=0.
\end{align*}
With the conditions $\delta=D=0$ we have
\begin{align*}
  C g^2&=0\\
  N g^1&=0.
\end{align*}
Since we are looking for nonzero solutions then $N=C\neq0$ and therefore, solutions are common zeros of $g^1$ and $g^2$ in a neighbourhood of the origin. But $g^1$ and $g^2$ are independent functions and generically the only common zero in a neighbourhood of the origin is $0$ itself. As a result there are no symmetric periodic orbits for the given Hamiltonian system.
\begin{rem}
Another way to see the non-existence of symmetric solutions in that system is by using a Liapunov function. Consider the Hamiltonian given by the formula $H=\delta(a_1+b_1N+c_1C+\cdots)+D(a_2+b_2N+c_2C+\cdots)$. Restricting the Hamiltonian system on the two dimensional invariant space $\mathrm{Fix} S$ gives
\begin{align*}
\dot{x}&=2y\left(a_1+2(b_1+c_1)(x^2+y^2)+\cdots\right)+2x\left(a_2+2(b_2+c_2)(x^2+y^2)+\cdots\right) \\
\dot{y}&=-2x\left(a_1+2(b_1+c_1)(x^2+y^2)+\cdots\right)+2y\left(a_2+2(b_2+c_2)(x^2+y^2)+\cdots\right)
\end{align*}
Easy computations show that the eigenvalues of the linear system are $\lambda=2(a_2\pm a_1 i)$. In order to get periodic orbits we should have $a_2=0$ and the system would be written as
\begin{align*}
\dot{x}&=2y\left(a_1+2(b_1+c_1)(x^2+y^2)+\cdots\right)+2x\left(2(b_2+c_2)(x^2+y^2)+\cdots\right) \\
\dot{y}&=-2x\left(a_1+2(b_1+c_1)(x^2+y^2)+\cdots\right)+2y\left(2(b_2+c_2)(x^2+y^2)+\cdots\right).
\end{align*}
Consider as Liapunov function $V=x^2+y^2$. Differentiating $V$ in the direction of the Hamiltonian vector field yields
\begin{align*}
\dot{V}&=2x\dot{x}+2y\dot{y}\\
&=8(x^2+y^2)^2(b_2+c_2).
\end{align*}
The number $b_2+c_2$ is generically non-zero and therefore $\dot{V}$ is non-zero. This means the sign of $\dot{V}$ (either positive or negative) is constant along any trajectory, so that the trajectory cannot be closed.  Thus, the system does not have any symmetric periodic orbits.
\end{rem}
\subsection{Non-symmetric Periodic Orbits}
For this case we only need to solve the pair \eqref{no15} and \eqref{no16} without any extra conditions. Multiplying \eqref{no15} by $z_1$ and \eqref{no16} by $z_2$ gives
\begin{multline}
\label{no19}
|z_1|^2g^1+\delta\left(g^1_N |z_1|^2+g^1_C z_1z_2
+ g^1_{D^2} 2D(-iz_1z_2)\right)-iz_1z_2g^2\\[12pt]
+D\left(g^2_N |z_1|^2+g^2_C z_1z_2 + g^2_{D^2} 2D(-iz_1z_2)\right)=0
\end{multline}
\begin{multline}
\label{no20}
-|z_2|^2g^1+\delta\left(g^1_N |z_2|^2+g^1_C z_1z_2
+ g^1_{D^2} 2D(-iz_1z_2)\right)-iz_1z_2g^2+\\[12pt]
+D\left(g^2_N |z_2|^2+g^2_C z_1z_2 + g^2_{D^2} 2D(-iz_1z_2)\right)=0
\end{multline}
By adding theses two equations we have
\begin{multline}
\label{no21}
\delta \left(g^1+Ng^1_N+(C+iD)g^1_C+2g^1_{D^2}(-iD)(C+iD)\right)-i(C+iD)g^2\\[12pt]
+D\left(Ng^2_N+(C+iD)g^2_C+2g^2_{D^2}(-iD)(C+iD)\right)=0
\end{multline}
The real and imaginary parts of equation \eqref{no21} are
\begin{equation}
\label{no22}
\delta \left(g^1+Ng^1_N+Cg^1_C+2g^1_{D^2}D^2\right)+Dg^2+D\left(Ng^2_N+Cg^2_C+2g^2_{D^2}D^2\right)=0
\end{equation}
\begin{equation}
\label{no23}
\delta \left(Dg^1_C-2g^1_{D^2}CD\right)-Cg^2+D\left(Dg^2_C-2g^2_{D^2}CD\right)=0
\end{equation}
The last equation to be considered comes from subtracting \eqref{no20} from \eqref{no19} and it will take the form
\begin{equation}
\label{no24}
Ng_1+\delta^2g^1_N+D\delta g^2_N=0
\end{equation}
This means finding non-symmetric solutions of the Hamiltonian system will be by solving the triple \eqref{no22},\eqref{no23} and \eqref{no24}. Clearly the system is singular at the origin and can be studied using a blow-up method. 
For that purpose define the new coordinates $(u,v,w,t,x)$ by
$$N=rv,\quad
C=ru,\quad
D=rw,$$
$$\tau=rt,\quad
\delta=rx,
$$
combined together by the relation $v^2=u^2+w^2+x^2$ according to the relation $N^2=\delta^2+C^2+D^2$. Substituting these new coordinates in \eqref{no22},\eqref{no23} and\eqref{no24} gives
\[r\left(vg^1+rx^2g^1_N+rxwg^2_N\right)=0\]
\[r\left(x(g^1+rvg^1_N+rug^1_C+2r^2w^2g^1_{D^2})+w(g^2+rvg^2_N+rug^2_C+2r^2w^2g^2_{D^2})\right)=0\]
\begin{equation}
\label{no25}
r\left(x(g^1_Crw-2r^2wug^1_{D^2})-ug^2+w(rwg^2_C-2r^2uwg^2_{D^2})\right)=0
\end{equation}
We are interested in the non-zero solutions, i.e.\ $r\neq0$. The first step is to divide by the common power of $r$ in these equations and the second step is to apply the implicit function theorem. For simplicity we can write the Taylor series for the functions $g^1$ and $g^2$ as
\begin{align*}
g^1&=\displaystyle\frac{\tau}{2}+a_1N+c_1C+d_1D^2+\cdots\\
g^2&=b_2\tau+a_2N+c_2C+d_2D^2+\cdots
\end{align*}
which with the new coordinates take the form 
\begin{align*}
g^1=r\displaystyle\bar{g}^1&=r(\displaystyle\frac{t}{2}+a_1v+c_1u+d_1rw^2+\cdots)\\
g^2=r\displaystyle\bar{g}^2&=r(b_2t+a_2v+c_2u+d_2rw^2+\cdots)
\end{align*}
Accordingly, the system \eqref{no25} will be written as  
\[r^2\left(v\bar{g}^1+x^2\bar{g}^1_v+xw\bar{g}^2_v\right)=0\]
\[r^2\left(x(\bar{g}^1+v\bar{g}^1_v+u\bar{g}^1_u+2rw^2\bar{g}^1_{rw^2})+w(\displaystyle\bar{g}^2+v\bar{g}^2_v+u\bar{g}^2_u+2rw^2\bar{g}^2_{rw^2})\right)=0\]
\begin{equation}
\label{no26}
r^2\left(x(w\bar{g}^1_u-2rwu\bar{g}^1_{rw^2})-u\displaystyle\bar{g}^2+w(w\bar{g}^2_u-2ruw\bar{g}^2_{rw^2})\right)=0
\end{equation}
Note here that $g^1_N=\bar{g}^1_v$ etc.  Dividing by $r^2$ and substituting $r=0$ yields
\begin{equation*}
v(\frac{t}{2}+a_1v+c_1u)+a_1x^2+a_2xw=0
\end{equation*}
\begin{equation*}
x(\frac{t}{2}+2a_1v+2c_1u)+w(b_2t+2a_2v+2c_2u)=0
\end{equation*}
\begin{equation}
\label{no27}
c_1xw-u(b_2t+a_2v+c_2u)+c_2w^2=0.
\end{equation}
Clearly, the system can not be solved by the implicit function theorem at this point in the argument. As a result we will use a different technique as illustrated in the next section. We will show that \eqref{no27} has non-degenerate solutions, then apply a continuation argument to show \eqref{no26} has solutions when $r>0$. Adding the relation between the variables $N,C,D$ and $\delta$ gives us the system

\begin{equation*}
v(\frac{t}{2}+a_1v+c_1u)+a_1x^2+a_2xw=0
\end{equation*}
\begin{equation*}
x(\frac{t}{2}+2a_1v+2c_1u)+w(b_2t+2a_2v+2c_2u)=0
\end{equation*}
\begin{equation*}
c_1xw-u(b_2t+a_2v+c_2u)+c_2w^2=0
\end{equation*}
\begin{equation}
\label{no40}
u^2+w^2+x^2-v^2=0.
\end{equation}

 First of all we want to count the number of all solutions of the system \eqref{no40}. For that purpose we need the following theorem.

\begin{thm}[Bezout's theorem]
\label{no41}
Suppose $n$ homogeneous polynomials on $\mathbb{C}$ in $n+1$ variables, of degrees  $d_1,d_2,..,d_n$, that define $n$ hypersurfaces in the projective space of dimension $n$. If the number of intersection points of the hypersurfaces is finite, then this number is $d_1d_2..d_n$ if the points are counted with their multiplicity.
\end{thm}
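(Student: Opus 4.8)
The plan is to treat this as the classical result it is, and to sketch the proof by \emph{degeneration to a union of hyperplanes combined with conservation of number}, since that route keeps the accounting of multiplicities transparent. Write $f_1,\dots,f_n\in\mathbb{C}[x_0,\dots,x_n]$ for the given homogeneous forms of degrees $d_1,\dots,d_n$, and assume the projective hypersurfaces $V(f_i)$ meet in a finite set $Z\subset\mathbb{P}^n$. The first step is to fix the meaning of the multiplicity at a point $p\in Z$, namely the length of the local ring $\mathcal{O}_{\mathbb{P}^n,p}/(f_1,\dots,f_n)$; finiteness of $Z$ guarantees each such length is finite, and $\sum_{p\in Z}\operatorname{mult}_p$ is the quantity to be evaluated.

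The second step is to compute this number for a maximally degenerate system. Replace each $f_i$ by a product $\widetilde f_i=\prod_{j=1}^{d_i}\ell_{ij}$ of $d_i$ linear forms chosen in general position. The common zero locus of $\widetilde f_1,\dots,\widetilde f_n$ is then the union, over all $n$-tuples $(j_1,\dots,j_n)$ with $1\le j_k\le d_k$, of the subspaces $V(\ell_{1j_1})\cap\cdots\cap V(\ell_{nj_n})$; for $n$ hyperplanes in general position in $\mathbb{P}^n$ each such intersection is a single reduced point, and general position makes these $d_1\cdots d_n$ points pairwise distinct with each intersection transverse. Hence the degenerate system has exactly $d_1\cdots d_n$ common zeros, each of multiplicity one.

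The third step is to connect the given system to the degenerate one through a family, say $f_i^{(s)}=(1-s)f_i+s\,\widetilde f_i$ for $s$ in a line, and to invoke conservation of number: because $\mathbb{P}^n$ is complete, no common zero can escape to infinity, so the total number of common zeros counted with multiplicity is constant over the set of parameter values for which the fibre is finite, and a small perturbation deals with the exceptional parameters where the intersection fails to be proper. Comparing the fibre at the given system with a generic nearby fibre then yields $\sum_{p\in Z}\operatorname{mult}_p=d_1\cdots d_n$. (Two equivalent standard routes give the same conclusion: cohomologically, $V(f_i)$ has class $d_i h$ in $H^*(\mathbb{P}^n;\mathbb{Z})\cong\mathbb{Z}[h]/(h^{n+1})$, so a proper intersection is represented by $d_1\cdots d_n\,h^n$, the class of $d_1\cdots d_n$ points; or one computes the Hilbert polynomial of the complete intersection ideal $(f_1,\dots,f_n)$.)

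The main obstacle is precisely the conservation-of-number step: one must know that the local-ring lengths really do assemble into a deformation-invariant total, which at bottom requires flatness of the family of intersection schemes (so that the length of the fibre is locally constant) or, equivalently, the intersection product in the Chow ring of $\mathbb{P}^n$ together with the statement that a proper intersection is computed by the scheme-theoretic intersection with its length multiplicities. As this is standard material (see, e.g., Fulton), in the paper we simply cite it and apply the resulting count $d_1\cdots d_n$ to the homogeneous system \eqref{no40}.
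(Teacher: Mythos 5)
This statement is a classical theorem which the paper does not prove at all: it simply states Bezout's theorem and refers the reader to Hassett \cite{b7}, so there is no in-paper argument for your sketch to be compared with. Your proposal is a correct standard route to the result: you define the local intersection multiplicity as the length of $\mathcal{O}_{\mathbb{P}^n,p}/(f_1,\dots,f_n)$, compute the count $d_1\cdots d_n$ for a degenerate system of products of general linear forms, and transfer the count back by conservation of number; you also correctly identify where the real mathematical content lies (flatness of the family of intersection schemes, or equivalently the intersection product in the Chow ring, so that the total length is deformation invariant) and, like the paper, you ultimately defer that step to the standard literature. The alternative routes you mention in passing (the class $d_i h$ in $H^*(\mathbb{P}^n;\mathbb{Z})\cong\mathbb{Z}[h]/(h^{n+1})$, or the Hilbert polynomial of the complete intersection) are in fact closer to the textbook treatments the paper cites, and either would serve equally well. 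The only point to watch in your degeneration argument is that the pencil $f_i^{(s)}=(1-s)f_i+s\widetilde f_i$ may have special parameter values where the intersection fails to be finite; you acknowledge this and handle it by genericity, which is adequate for a sketch. For the purpose the theorem serves here, namely bounding the number of solutions of the homogeneous system \eqref{no40} by $2^4=16$ counted with multiplicity, the citation-level treatment chosen by the paper (and effectively endorsed at the end of your proposal) is entirely appropriate.
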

For more details and proof see for example \cite{b7}.

The system \eqref{no40} consists of four homogeneous equations  each of degree two with five variables. According to Bezout's Theorem we have $16$ complex solutions for that system and can divide them into two main types: solutions when $v=0$ and solutions when $v\neq0$.
\subsubsection{\textbf{Solutions when $\boldsymbol{v=0}$}}
In that case, algebric calculations  give  a total of three different  solutions:
\begin{enumerate}
 \item $\{t \in \mathbb{R}, u = 0, w = 0, x = 0\}$,
 \item $ \{t = \mp 2i\frac{c_2}{b_2}w,\, u =  \pm iw, \,w \in\mathbb{R}, \,x = 0\}.$
 \end{enumerate}
 Now we want to study the multiplicity of each solution. Consider the Jacobian matrix for the system \eqref{no40} with respect to $v,t,u,w,x$
 \[J=
 \left(\begin{smallmatrix}
\frac{1}{2}t+2a_1v+c_1u & \frac{1}{2}v & c_1v & a_2x & 2a_1x+a_2w\\
2a_1x+2a_2w & \frac{1}{2}x+b_2w & 2c_1x+2c_2w & 2a_2v+b_2t+2c_2u & \frac{1}{2}t+2a_1v+2c_1u \\
-a_2u & -b_2u & -a_2v-b_2t-2c_2u & c_1x+2c_2w & c_1w\\
-2v & 0& 2u & 2w & 2x\\
\end{smallmatrix}\right)
\]

Substituting the values of the first solution and the condition $v=0$ in the Jacobian matrix yields
\[J\mid_{v=0,sol.1}=
\left( \begin{matrix}
\frac{1}{2}t & 0 & 0 &0 & 0\\
0 & 0 & 0 & b_2t& \frac{1}{2}t\\
0& 0 & -b_2t& 0 & 0\\
0 & 0& 0 & 0 & 0\\
\end{matrix}
\right)
\]
  To get the appropriate square submatrix we eliminate the second column because $t$ is non-zero and get
 \[ J_1=
 \left( \begin{matrix}
\frac{1}{2}t & 0 &0 & 0\\
0 & 0 & b_2t& \frac{1}{2}t\\
0&  -b_2t& 0 & 0\\
0 &0 & 0 & 0\\
\end{matrix}
\right)
 \]
   This matrix is of rank three and therefore this first solution is not simple. To study its multiplicity we need to study the behaviour of system \eqref{no40} near a solution point for example say $(v,t,u,w,x)=(0,2,0,0,0)$. Consider the system
   \begin{equation*}
v(1+a_1v+c_1u)+a_1x^2+a_2xw=\varepsilon_1
\end{equation*}
\begin{equation*}
x(1+2a_1v+2c_1u)+w(2b_2+2a_2v+2c_2u)=\varepsilon_2
\end{equation*}
\begin{equation*}
c_1xw-u(2b_2+a_2v+c_2u)+c_2w^2=\varepsilon_3
\end{equation*}
\begin{equation}
\label{no42}
u^2+w^2+x^2-v^2=\varepsilon_4.
\end{equation}
Near the point $(v,t,u,w,x)=(0,2,0,0,0)$ the first equation can be solved by the implicit function theorem for $v$, the second for $x$ and the third equation for $u$. As a result we end up with solving the equation
\begin{equation*}
w^2+f(w)=\varepsilon_4,
\end{equation*}
where $f(w)$ is a function constructed by substituting the solutions from  the implicit function theorem in equation \eqref{no42}. Clearly $f$ is of order greater than one. So, the least order  coefficient is $w^2$ and so the studied solution is of multiplicity two.\\

   Regarding the multiplicity of the second and third solution we should assume that $w\neq0$ for a non-zero solution; for simplicity let $w=1$. The Jacobian matrix will take the form
   \[J\mid_{v=0,w=1,sol.2}=
\left( \begin{matrix}
\mp c_2i/b_2\pm c_1 i & 0 & 0 &0 & a_2\\
2a_2& b_2 & 2c_2 & 0& \mp c_2 i/b_2\pm 2 c_1 i\\
\mp a_2 i& \mp b_2 i & 0&2c_2 & c_1\\
0 & 0& \pm 2i & 2 & 0\\
\end{matrix}
\right)
\]
Since $w=1$, we can omit the $w$- column and get
 \[J_2=
\left( \begin{matrix}
\mp c_2i/b_2\pm c_1i & 0 & 0& a_2\\
2a_2& b_2 & 2c_2 &\mp c_2i/b_2\pm 2 c_1i\\
\mp a_2i& \mp b_2i & 0&c_1\\
0 & 0& \pm 2i & 0\\
\end{matrix}
\right)
\]
\[ \det{J_2}= -2(a_2^2 b_2^2+b_2^2 c_1^2-2 b_2 c_1 c_2+c_2^2)/b2.\]
We can assume that this result is non-zero and therefore the second and the third solutions are simple. We conclude that the case $v=0$ corresponds to four solutions where the first solution is doubled but the others are of multiplicity one. Note that $v=0$ implies $N= |z_1|^2+ |z_2|^2=0$. Thus, these four solutions won't be counted as periodic solutions of the given system, but will help us find out how many non-zero periodic solutions there are.

\subsubsection{\textbf{Solutions when $\boldsymbol{v\neq0}$}}
 There remain $12$ solutions for the case $v\neq0$ according to Bezout's theorem . The following proposition guarantees a minimum of two real solutions for the system \eqref{no40}.
\begin{prop}
\label{prop1}
For any choice of coefficients $\{a_1,a_2,b_2,c_1,c_2\}$ the two points

\[\{v\in {\mathbb{R}}^*, t=-4 a_1 v, u=w=0, x=\pm v\},\] satisfy equation \eqref{no40} , when $v\neq0$.
\end{prop}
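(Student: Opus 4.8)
The plan is to prove the proposition by direct substitution, with the choice of the two candidate points guided by the following remark. In the original variables the conditions $C=D=0$ mean $z_1z_2=0$, i.e.\ we are on one of the two coordinate planes $\{z_1=0\}$, $\{z_2=0\}$, which are flow-invariant; in the blow-up coordinates $(u,v,w,t,x)$ these planes correspond to $u=w=0$, and then the constraint $N^2=\delta^2+C^2+D^2$ (that is, $v^2=u^2+w^2+x^2$) forces $x=\pm v$. So I would search for solutions of \eqref{no40} of the form $u=w=0$, $x=\varepsilon v$ with $\varepsilon=\pm1$ and $v\in\mathbb{R}^*$, leaving $t$ to be determined.

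Carrying this out is a short computation in four steps. Setting $u=w=0$ makes the third equation of \eqref{no40}, namely $c_1xw-u(b_2t+a_2v+c_2u)+c_2w^2=0$, hold identically, and reduces the constraint equation $u^2+w^2+x^2-v^2=0$ to $x^2=v^2$, i.e.\ $x=\pm v$. Next, the second equation becomes $x\bigl(\tfrac{t}{2}+2a_1v\bigr)=0$, and since $x=\pm v\neq0$ this is equivalent to $t=-4a_1v$. It then remains only to verify that the first equation, which has collapsed to $v\bigl(\tfrac{t}{2}+a_1v\bigr)+a_1x^2=0$, is automatic: substituting $t=-4a_1v$ and $x^2=v^2$ gives $-a_1v^2+a_1v^2=0$. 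Finally one checks that $v\in\mathbb{R}^*$, $t=-4a_1v$, $u=w=0$, $x=\pm v$ are all real for every choice of $\{a_1,a_2,b_2,c_1,c_2\}$, which produces the two claimed solutions (the cases $\varepsilon=+1$ and $\varepsilon=-1$), which are distinct since $v\neq0$.

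There is no genuine obstacle in this argument: the whole point of the proposition is that these two ``coordinate-plane'' solutions survive for all values of the coefficients, in contrast to the remaining ten of the twelve $v\neq0$ solutions counted by Bézout's theorem, which generically move with the coefficients and need not be real. The only thing worth flagging for what comes next is non-degeneracy: to feed these two solutions into the continuation argument one will want the Jacobian of the system \eqref{no40} to have maximal rank at each of them, which can be checked exactly as was done above for the $v=0$ solutions; I would expect the $\pm v$ pair to be simple for generic coefficients, and that computation — rather than the existence claim proved here — is where the real work lies.
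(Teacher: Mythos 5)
Your proof is correct and matches the paper's approach: the paper's own proof is just ``straightforward calculations yield the result,'' and your explicit substitution (third equation vanishing identically with $u=w=0$, the constraint giving $x=\pm v$, the second equation forcing $t=-4a_1v$, and the first collapsing to $-a_1v^2+a_1v^2=0$) is exactly that calculation carried out. The remarks on motivation and non-degeneracy are sensible additions but not needed for the statement itself.
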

\begin{proof}
Straightforward calculations yield the result.
\end{proof}

 In order to find out more about the maximum number of real solutions we can find we will use a numerical approach. We choose various values for the constants in the system \eqref{no40} and then solve the equations using Maple. Since we are interested in solutions with $v\neq0$, we put $v=1$ for simplicity.  These numerical calculations suggest that the system can have a maximum of eight real solutions, including the two analytic solutions given by Proposition \ref{prop1}. In addition, there are examples of systems with four or six real solutions. Our aim is to prove  that for each of these cases, the solutions are non-degenerate. Then, under any perturbation of the set of coefficients there still exist (nearby) real solutions (i.e.\ periodic solutions). In the following we study an example of each set of coefficients that has two, four, six or eight  real solutions for the studied system \eqref{no40}. Then, we check their non-degeneracy conditions. Note that all numbers are rounded to four decimal digits.

\begin{exm}[A system with two real solutions]
Consider the set \[R=\{a_1 = 1, a_2 = 5, b_2 = 1, c_1 = 2, c_2 = 2, v = 1\}.\]
The corresponding system has only two real solutions
\[\{t = -4, u = 0, w = 0, x =\pm v= \pm 1\},\]
which are those given in Proposition \ref{prop1}. The remaining $10$ solutions are non-real. In order to check the non-degeneracy condition, we need to study the proper submatrix of $J$ for each solution and ensure that its determinant is non-zero.
Substituting the values given in $R$ and the two solutions in $J$ yields
\[J_1=
\left( \begin{matrix}
0&0.5&2&\pm 5&\pm 2\\
\pm 2&\pm 0.5&\pm 4&6&0\\
0&0&-1&\pm 2&0\\
-2&0&0&0&\pm 2\\
\end{matrix}
\right).
\]
Since $t\neq0$, we omit the $t-$column and we have the submatrix
\[J_{11}=
\left( \begin{matrix}
0&2&\pm 5&\pm 2\\
\pm 2&\pm 4&6&0\\
0&-1&\pm 2&0\\
-2&0&0&\pm 2\\
\end{matrix}
\right),
\]
\[\det{J_{11}}=\pm20.\]
Therefore, these two solutions are non-degenerate.
 \end{exm}
A similar argument is used in the remaining examples to prove the non-degeneracy of solutions in each case.

\begin{exm}[A system with four real solutions]
Let the set of coefficients in the system \eqref{no40} be
\[R=\{a_1 = 1, a_2 = 5, b_2 = -2, c_1 = 2, c_2 = 2, v = 1\}.\]
the associated system has $8$ non-real solutions and only four real solutions and the real ones are :
\begin{enumerate}
\item $\{t = -4, u = 0, w = 0, x =\pm v= \pm 1\}$
\item $\{t = 1.5602, u = -0.9681, w = \pm 0.0855, x =\pm  0.2354\}$
\end{enumerate}

Substituting $R$ and the first two solutions in the matrix $J$ we  get
 \[J_1=
\left( \begin{matrix}
0 &0.5&2&\pm5&\pm2\\
\pm2&\pm0.5&\pm4&18&0\\
0&0&-13&\pm2&0\\
-2&0&0&0&\pm2\\
\end{matrix}
\right)
\]
Now we can choose the submatrix $J_{11}$ by omitting the second column because $t$ is non-zero and we find its determinant to be $\det J_{11} = \pm692\neq0$.
In the same way we  can study the third and fourth solutions to get
\[J_{22}=
\left(
 \begin{matrix}
0.8439&2&\pm 1.1772&\pm0.8985\\
\pm1.3262&\pm1.2839&3.0071&-1.0924\\
4.8406&1.9929&\pm0.8130&\pm0.1711\\
-2&-1.9362&\pm0.1711&\pm0.4709\\
\end{matrix}
\right)\\
\]
We have $\det{J_{22}}=\pm 35.6351\neq0$.

Since the determinants are non-zero, all four solutions are non-degenerate and we can find an open set of coefficients that give four real solutions.
\end{exm}
\begin{exm}[A system with six real solutions]
Let
\[R=\{a_1 = -2, a_2 = -11, b_2 = -5, c_1 = 1, c_2 = 2, v = 1\}.\]
The system \eqref{no40} with those coefficients has only six real solutions:
\begin{enumerate}
\item $\{t = 8, u = 0, w = 0, x = \pm v=\pm 1\}$
\item $\{t = -2.5592, u = 0.0346, w = \pm 0.4980,x = \mp 0.8665\}$
\item $\{t = -3.7663, u = 0.1529, w =\pm 0.8984,x = \mp 0.4118\}$
\end{enumerate}
The non-degeneracy of the above solutions can be studied in pairs. Firstly, we study the determinant of the appropriate matrix $J_{11}$ associated to the  first and second solutions.

\[J_{11}=
\left( \begin{matrix}
0&1&\mp11&\mp4\\
\mp4&\pm2&-62&0\\
0&51&\pm1&0\\
-2&0&0&\pm2\\
\end{matrix}
\right)\\
\]
\[ \det{J_{11}}=\mp20816.\]

Similarly, for the rest of solutions we have
\[J_{22}=
\left( \begin{matrix}
-5.2450&1&\pm9.5314&\mp2.0120\\
\mp7.4900&\pm0.2590&-9.0658&-5.2104\\
0.3804&-1.9342&\pm1.1255&\pm0.4980\\
-2&0.0692&\pm0.9960&\mp1.7330\\
\end{matrix}
\right)\\
\]
\[ \det{J_{22}}=\mp164.8123\]

\[J_{33}=
\left( \begin{matrix}
-5.7303&1&\pm4.5299&\mp8.2346\\
\mp18.1165&\pm2.7698&-2.5567&-5.5774\\
1.6818&-8.4433&\pm3.1816&\pm0.8984\\
-2&0.3058&\pm1.7967&\mp0.8236\\
\end{matrix}
\right)\\
\]
\[ \det{J_{33}}=\pm1827.2294.\]
As a result all real solutions of this case are non-degenerate
\end{exm}

We end with an example of a system with eight real solutions, which is the largest number of real solutions we found using numerical calculations.
\begin{exm}[A system with eight real solutions]
Let
\[R=\{a_1 = 1, a_2 = -4, b_2 = -1, c_1 = 1, c_2 = 2, v = 1\}\]
The corresponding real solutions are only eight and they are
\begin{enumerate}
\item$\{t = -4, u = 0, w = 0, x = \pm v=\pm 1\}$
\item$\{t = -4.9432, u = -0.2615, w =\pm 0.2274, x = \mp 0.9380\}$
\item$\{t = -2.8537, u = 0.8527, w = \pm 0.4155, x =\pm 0 .3165\}$
\item$\{t = -6.4260, u = 0.2940, w = \pm 0.8063, x = \mp 0.5133\}$.
\end{enumerate}
The non-degeneracy conditions are
\[J_{11}=
\left( \begin{matrix}
0&1&\mp4&\pm2\\
\pm2&\pm2&-4&0\\
0&0&\pm1&0\\
-2&0&0&\pm2\\
\end{matrix}
\right)\\
\]
\[ \det{J_{11}}=\pm4\]
\[J_{22}=
\left( \begin{matrix}
-0.7331&1&\pm3.7521&\mp2.7857\\
\mp3.6953&\mp0.9664&-4.1029&-0.9947\\
-1.0461&0.1029&\mp0.0284&\pm0.2274\\
-2&-0.5231&\pm0.4548&\mp1.8760\\
\end{matrix}
\right)\\
\]
\[ \det{J_{22}}=\mp13.8083\]
\[J_{33}=
\left( \begin{matrix}
1.4259&1&\mp1.2659&\mp1.0293\\
\mp2.6915&\pm2.2951&-1.7353&2.2786\\
3.4110&-2.2647&\pm1.9787&\pm0.4155\\
-2&1.7055&\pm0.8311&\pm0.6329\\
\end{matrix}
\right)\\
\]
\[ \det{J_{33}}=\mp43.7450\]

\[J_{44}=
\left( \begin{matrix}
-0.9190&1&\pm2.0533&\mp4.2517\\
\mp7.4767&\pm2.1984&-0.3979&-0.6249\\
1.1761&-3.6021&\pm2.7117&\pm0.8063\\
-2&0.5881&\pm1.6125&\mp1.0266\\
\end{matrix}
\right)\\
\]
\[ \det{J_{44}}=\pm111.6657\]
Therefore, all eight solutions are non-degenerate.
\end{exm}
\subsection{Conclusion}
Bezout's theorem guaranteed a total of $12$ solutions for the case $v\neq0$, but numerical calculations found at most eight of them to be real (and at least two). The last thing to consider is the effect of the addition of higher order terms to the system \eqref{no40} when solving by the implicit function theorem. We will choose one of the previous examples and prove the existence of periodic orbits in that system and the rest can be done in the same way.

We consider the solution point $(t,u,w,v,x,r)=(-4,0,0, 1,1,0)$ as a candidate. We want to apply the implicit function theorem on the system in a neighbourhood of that point. Note that the functions $g^1,g^2$ are given by
\begin{align}
\label{no43}
g^1(N,C,D^2,\tau)&=\displaystyle\frac{\tau}{2}+a_1N+c_1C+d_1D^2+e_1N^2+f_1NC+g_1N\tau+\cdots,\\
\label{no44}
g^2(N,C,D^2,\tau)&=b_2\tau+a_2N+c_2C+d_2D^2+e_2N^2+f_2NC+g_2N\tau+\cdots
\end{align}
In our new coordinates \eqref{no43} and \eqref{no44} will take the form
\begin{align}
g^1(N,C,D^2,\tau)&=r[\displaystyle\frac{t}{2}+a_1v+c_1u+d_1rw^2+e_1rv^2+f_1rvu+g_1rvt+\cdots],\\
g^2(N,C,D^2,\tau)&=r[b_2t+a_2v+c_2u+d_2rw^2+e_2rv^2+f_2rvu+g_2rvt+\cdots],
\end{align}
therefore, the matrix formula associated to the implicit function theorem  calculated at the point $(-4,0,0, 1,1,0)$ will be
\[
\left(
\begin{array}{cccc|c}
 0&2&5&2&3e_1-8g_1\\
 2&4&6&0&3e_1-8g_1\\
 0&-1&2&0&0\\
 -2&0&0&2&0\\
  \end{array}
   \right) =
   \left(
 \begin{array}{c|c}
 X&Y
  \end{array}
 \right).
 \]
The matrix $X$ is invertible and by the implicit function theorem we can solve $v,u,w,x$ as functions of $r$. The linear part of the Taylor series of those solutions is determined by the matrix
\[
X^{-1}Y=\left(
\begin{array}{cccc}
 7/5&-9/10&-4/5&-7/5\\
 -2/5&2/5&-1/5&2/5\\
-1/5&1/5&2/5&1/5\\
7/5&-9/10&-4/5&-9/10\\
  \end{array}
   \right)
   \left(
   \begin{array}{c}
   3e_1-8g_1\\
   3e_1-8g_1\\
   0\\
   0\\
   \end{array}
   \right).
   \]
Those solutions can be written as functions of $r$ as follows
\begin{align*}
v(r)&=1-\frac{1}{2}(3e_1-8g_1)r+h.o.t.\\
x(r)&=1-\frac{1}{2}(3e_1-8g_1)r+h.o.t.\\
v&=w=0.
\end{align*}
Converting back to our basic coordinates $N,C,D,\delta$ gives
\begin{align*}
N&=rv=r-\frac{1}{2}(3e_1-8g_1)r^2+h.o.t.\\
\delta&=rx=r-\frac{1}{2}(3e_1-8g_1)r^2+h.o.t.\\
C&=D=0
\end{align*}
This curve of solutions gives a one parameter family of periodic orbits for the equivariant Hamiltonian system. Similarly one can prove the existence of one parameter family of periodic solutions for each case studied before because of their non-degeneracy conditions.
 Accordingly, we  state the following  result.

\begin{thm}
Consider an equilibrium point $0$ of a $C^\infty $ equivariant Hamiltonian vector field $f$, with the
the symmetry $S$ acting anti-symplectically and $S^2 =I$. Assume that the linear Hamiltonian vector field
$L$ has two pairs of purely imaginary eigenvalues $\pm i $ and no other eigenvalues of the form $\pm ki,
k \in \mathbb{Z}$. The reduced Hamiltonian is in the form $h=\delta g^1(N,C,D^2,\tau)+ D g^2(N,C,D^2,\tau).$ Then
\begin{enumerate}
\item For an open dense set of coefficients $(a_1,a_2,b_2,c_1,c_2)$ there exists a neighbourhood of $0$ with no symmetric periodic orbits and at least two and at most $12$ non-symmetric periodic solutions of the equivariant Hamiltonian system .

\item There exist open sets of coefficients $U_i$ $(i=1,2,3,4)$, such that for coefficients in $U_i$ there are precisely $2i$ non-symmetric periodic orbits of period close to $2\pi$ as they tend to zero.
\end{enumerate}
\end{thm}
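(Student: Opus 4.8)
The proof assembles the facts established through this section; I would organise it in three blocks.

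\emph{Non-existence of symmetric orbits.} Symmetric periodic orbits lie in $\mathrm{Fix}\,S=\{(z,\bar z)\}$, where $\delta=D=0$ and $N=C$. Restricting the bifurcation equations \eqref{no15}--\eqref{no16} there and carrying out the displayed add/subtract manipulation reduces the problem to finding a common zero of $g^1$ and $g^2$ near $0$. The linear parts $\tfrac{\tau}{2}+a_1N+c_1C+\cdots$ and $b_2\tau+a_2N+c_2C+\cdots$ of $g^1,g^2$ are linearly independent unless $(a_2,c_2)=2b_2(a_1,c_1)$, a condition of codimension two; hence for an open dense set of coefficients the only common zero in a neighbourhood of the origin is $0$ itself, so there are no symmetric periodic orbits. (Equivalently one invokes the Liapunov function $V=x^2+y^2$ of the Remark, for which $\dot V=8(x^2+y^2)^2(b_2+c_2)$ has constant sign when $b_2+c_2\neq0$.)

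\emph{The upper bound of $12$.} For non-symmetric orbits I would use the blow-up already set up: introduce $(v,t,u,w,x,r)$ by $N=rv,\ C=ru,\ D=rw,\ \tau=rt,\ \delta=rx$, with the constraint $v^2=u^2+w^2+x^2$ coming from $N^2=\delta^2+C^2+D^2$; rewrite the triple \eqref{no22}--\eqref{no24} as the system \eqref{no26}, divide out the common factor $r^2$, and put $r=0$ to obtain the homogeneous degree-two system \eqref{no40} of four equations in the five projective variables $(v:t:u:w:x)$. By Bezout's theorem (Theorem \ref{no41}) this has $2\cdot2\cdot2\cdot2=16$ complex solutions with multiplicity. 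The analysis of the case $v=0$ identifies exactly four of them (one of multiplicity two, two simple), all with $N=rv=0$ and hence not periodic orbits; so at most $12$ solutions have $v\neq0$. This is the upper bound in part (1).

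\emph{The lower bound and the exact counts.} Proposition \ref{prop1} produces two real solutions with $v\neq0$ for \emph{every} choice of $(a_1,a_2,b_2,c_1,c_2)$, and the Jacobian computation (as in the worked examples) shows they are non-degenerate for an open dense set of coefficients. The four examples exhibit open sets $U_1,\dots,U_4$ of coefficients on which \eqref{no40} has exactly $2,4,6,8$ real solutions with $v\neq0$, all non-degenerate; openness of each $U_i$ is exactly the persistence of non-degenerate zeros under perturbation of the coefficients. For each such real solution of \eqref{no40} one then applies the implicit function theorem to the full system \eqref{no26} in the unknowns $(v,u,w,x)$ with parameter $r$: the relevant $4\times4$ minor $X$ is invertible (verified at the sample point $(-4,0,0,1,1,0)$), so the solution continues to a smooth curve $r\mapsto(v(r),u(r),w(r),x(r))$, $r>0$, of solutions of \eqref{no26}. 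Converting back to $N,C,D,\delta$, each such curve is a one-parameter family of non-symmetric periodic orbits of period $2\pi/(1+\tau)=2\pi/(1+rt)\to2\pi$ as $r\to0$. Collecting the three blocks gives both parts of the statement.

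I expect the main obstacle to be the non-degeneracy claim in full generality: Bezout only \emph{bounds} the number of zeros, so to lift each $r=0$ solution to a genuine periodic orbit, and to guarantee persistence, one needs each relevant solution of \eqref{no40} with $v\neq0$ to be simple. For the explicit coefficient sets this is the verification that the displayed $4\times4$ Jacobian determinants are non-zero; for the open-dense assertion one argues that the locus in coefficient space where some real $v\neq0$ solution degenerates is a proper algebraic subvariety, hence has open dense complement. A secondary subtlety is checking that passing from \eqref{no40} to the full $r$-dependent system \eqref{no26} (i.e.\ reinstating the higher-order terms) does not destroy the real solutions — which is precisely what the invertibility of $X$ and the implicit function theorem in $r$ provide.
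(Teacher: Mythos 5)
Your proposal is essentially the paper's own proof: the same reduction on $\mathrm{Fix}\,S$ to common zeros of $g^1,g^2$ (with the Liapunov-function remark as backup), the same blow-up leading to the homogeneous system \eqref{no40}, Bezout's count of $16$ minus the four solutions at $v=0$ (one double, two simple) for the bound of $12$, Proposition \ref{prop1} plus the non-degenerate numerical examples for the lower bound and the open sets $U_i$, and the implicit function theorem in $r$ (invertibility of $X$) to continue each non-degenerate $r=0$ solution to a genuine family of periodic orbits. The only slip is your genericity condition for the symmetric case: on $\mathrm{Fix}\,S$ one has $C=N$, so the linear parts of $g^1,g^2$ degenerate on the codimension-one set $a_2+c_2=2b_2(a_1+c_1)$ rather than only when $(a_2,c_2)=2b_2(a_1,c_1)$, which does not affect the open-dense conclusion.
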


\section{ The combined case $\mathbb{Z}_2^R \times\mathbb{Z}_2^S$}
It is natural at this point to ask about periodic orbits of a system possessing both the symmetries studied above. Consider now a reversible equivariant Hamiltonian system  under the action of the group $G=\mathbb{Z}_2^R \times\mathbb{Z}_2^S$, where $R$ and $S$ are the involutions defined in Section 5 and Section 6 respectively.  In this section we prove the existence of families of periodic solutions in a neighbourhood of the origin in that system.

On $\mathbb{C}^2$ the reduced Hamiltonian is a special case of the Hamiltonian in Section 5 and it takes the form
\[h(z_1,z_2,\tau)=\delta g(N,C,D^2,\tau).\]
Accordingly, the bifurcation equation will be
\begin{equation*}
\delta[g+N g_N+Cg_C+2D^2g_{D^2}]=0,
\end{equation*}

\begin{equation*}
Ng+[N^2-C^2-D^2]g_N=0,
\end{equation*}

\begin{equation}
\label{no46}
\delta D[g_C-2Cg_{D^2}]=0.
\end{equation}

Now we classify the solutions according to their symmetry type.

\subsection{Periodic orbits in the conical subspace \texorpdfstring{$\boldsymbol{\delta=0}$}{delta=0}}
Substituting $\delta =0$ in the system of equations \eqref{no46} yields
\begin{equation}
\label{no47}
Ng=0.
\end{equation}
For $R$ symmetric solutions one needs to solve \eqref{no47} in $\mathrm{Fix}R$. This implies
\[g(z,\tau)=0,\]
which can be solved for $\tau=\tau(z), z\in \mathrm{Fix}R$ by the implicit function theorem. This means any periodic orbit in the subspace $\delta=0$ has symmetry $R$. Solving equation\eqref{no47} for $z\in \mathrm{Fix}S$ gives one periodic orbit of symmetry $S$ and it is therefore $SR$ symmetric. Moreover, solving equation\eqref{no47} for $z\in \mathrm{Fix}(S,\pi)$ gives another orbit with symmetry $S$.

\subsection{Periodic orbits in \texorpdfstring{$\boldsymbol{\delta\neq0}$}{delta<>0}}
It remains to study the existence of $SR$ solutions which lie in the subset $\delta\neq0$. Clearly $\mathrm{Fix}RS=\{(z_1,z_2)\mid z_1,z_2\in \mathbb{R}\}$ which implies $D=0$ and therefore the system \eqref{no46} takes the form
\begin{equation*}
g+N g_N+Cg_C=0,
\end{equation*}
\begin{equation}
\label{no49}
N g+[N^2-C^2]g_N=0.
\end{equation}
Eliminating $g$ from both equations gives
\begin{equation}
\label{no48}
C(Ng_C+Cg_N)=0.
\end{equation}
If $C=0$, then by the fact $\frac{\partial g}{\partial\tau}(0)=\frac{1}{2}$ we can solve using the implicit function theorem. Now if $C\neq 0$, and $g_N(0)=n,g_C(0)=c$ are not both zero then, the system\eqref{no49} can be solved by the implicit function theorem. By the argument used in Theorem 5.2 we conclude that $SR$ periodic solutions exist when $n^2-c^2>0$. The following theorem describes the families of periodic solutions exist in this system.

\begin{thm}
Consider a symmetric equilibrium $0$ of a $\mathbb{Z}_2^R\times \mathbb{Z}_2^S$ reversible equivariant Hamiltonian vector field $f$ where $R$  is a reversing involution acting symplectically and $S$ is an involution acting anti-symplectically. Suppose that $Df(0)$ has two purely imaginary pairs of eigenvalues $\pm i$ with no other eigenvalues of the form $\pm ki,k \in \mathbb{Z}$. Also, denote $g_N(0)=n$ and $g_C(0)=c$. Then,
\begin{enumerate}
 \item there exists a two-parameter family of $R$ symmetric periodic solutions  in the conical subspace $\delta =0$ with two of them having extra symmetry $S$. The  period of all orbits tends to $2\pi$ as they approach the equilibrium.
 \item  there exist two Liapunov centre families of $SR$ symmetric periodic solutions in the open subset $\delta\neq0$ provided that $n^2-c^2>0$---one with $\delta>0$ and one with $\delta<0$. These two families are exchanged by both involutions $R$ and $S$.
\end{enumerate}
\end{thm}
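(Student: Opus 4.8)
The plan is to work directly with the bifurcation equations \eqref{no46}, splitting the analysis according to whether $\delta$ vanishes, and in each stratum restricting to an appropriate fixed-point subspace on which the implicit function theorem applies; the dimension-filling and the solution count then come from the analyses of the individual symmetries in Sections 5 and 6.

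\textbf{Part (1): the cone $\delta=0$.} Substituting $\delta=0$ into \eqref{no46} makes the first and third equations vanish identically and reduces the second to $Ng=0$ (equation \eqref{no47}); since $N=|z_1|^2+|z_2|^2\neq0$ away from the origin, the solutions are the zeros of $g$. On $\mathrm{Fix}\,R=\{(z,z)\}$ --- where $\delta=0$ automatically --- the scalar equation $g(z,\tau)=0$ is solved for $\tau=\tau(z)$ by the implicit function theorem, since $\partial g/\partial\tau(0)=\tfrac12\neq0$ (the same fact used in Section 5, originating from $H_2=|z_1|^2-|z_2|^2$), and $\tau(0)=0$ so the period $2\pi/(1+\tau)$ tends to $2\pi$ at the origin. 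Each such point lies on an $R$-symmetric periodic orbit which, by $R$-reversibility, meets $\mathrm{Fix}\,R$ in two points; a dimension count --- the flow is tangent to the three-dimensional cone $\{\delta=0\}$ and $\mathrm{Fix}\,R$ is two-dimensional --- shows that these orbits fill the cone, exactly as in the Buzzi--Lamb theorem of Section 5, so every orbit in the cone is $R$-symmetric. For the extra symmetry, $\mathrm{Fix}\,S$ and $\mathrm{Fix}(S,\pi)$ are one-dimensional subspaces contained in $\{\delta=0\}$; solving $g=0$ on each by the implicit function theorem singles out two orbits which then carry both $R$ and $S$, hence are $SR$-symmetric.

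\textbf{Part (2): the region $\delta\neq0$.} An $SR$-symmetric orbit lying off the cone must sit in $\mathrm{Fix}\,RS=\{(z_1,z_2)\mid z_1,z_2\in\mathbb{R}\}$, which forces $D=0$. Setting $D=0$ in \eqref{no46} leaves the pair $g+Ng_N+Cg_C=0$ and $Ng+(N^2-C^2)g_N=0$ (equation \eqref{no49}); eliminating $g$ gives $C(Ng_C+Cg_N)=0$ (equation \eqref{no48}). If $C=0$, the equation $g=0$ is solved by the implicit function theorem via $\partial g/\partial\tau(0)=\tfrac12$. If $C\neq0$ and $(n,c)=(g_N(0),g_C(0))\neq(0,0)$, equation \eqref{no49} is solved by the implicit function theorem, and then, exactly as in the proof of Theorem \ref{th1} with $D=0$, writing $N/g_N=-C/g_C=t$ and imposing $\delta^2=N^2-C^2>0$ forces $(g_N^2-g_C^2)t^2>0$, hence $n^2-c^2\geq0$ in the limit; so the families exist precisely when $n^2-c^2>0$, one for each sign of $\delta$. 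Finally, from $R(z_1,z_2)=(z_2,z_1)$ and $S(z_1,z_2)=(\bar z_2,\bar z_1)$ one reads off that each of $R$ and $S$ sends $\delta=|z_1|^2-|z_2|^2$ to $-\delta$, so each involution interchanges the two families.

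\textbf{Main obstacle.} The only step demanding real care is the dimension-filling claim in Part (1): that the $R$-symmetric orbits through $\mathrm{Fix}\,R$ sweep out the entire three-dimensional cone rather than a thinner subset. This, however, is precisely the argument already carried out for the symmetric periodic solutions in Section 5 and can be quoted. Everything else is a routine restriction to a fixed-point subspace combined with the implicit function theorem and the solution count of Theorem \ref{th1}.
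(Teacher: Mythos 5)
Your proposal is correct and takes essentially the same route as the paper: the same splitting into the cone $\delta=0$ (where the equations reduce to $Ng=0$, solved for $\tau$ by the implicit function theorem and combined with the Section 5 argument that the $R$-symmetric orbits fill the cone) and the region $\delta\neq0$ (restriction to $\mathrm{Fix}\,RS$, so $D=0$, elimination of $g$ to get $C(Ng_C+Cg_N)=0$, and the Theorem \ref{th1} argument giving the condition $n^2-c^2>0$ and the two families exchanged by $R$ and $S$). One cosmetic slip: $\mathrm{Fix}\,S$ and $\mathrm{Fix}(S,\pi)$ are two-dimensional, $S^1$-invariant subspaces of $\mathbb{C}^2$ (only their traces on the torus inside the cone are one-dimensional), which is exactly why each contributes a single orbit with extra symmetry at each period level, as in the paper.
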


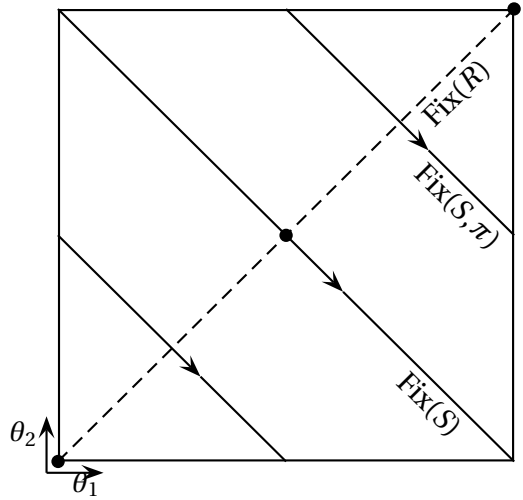
\begin{figure}
\centering
\psset{unit=0.75}
\begin{pspicture}(-4.5,-4.5)(4,4)
\psdots[linewidth=1.5pt](0,0)(4,4)(-4,-4)
 \psset{arrowsize=6pt}
\psframe(-4,-4)(4,4)
\psline[linestyle=dashed](-4,-4)(4,4) \rput{45}(3,2.5){\large$\mathrm{Fix}(R)$}
\psline{->}(-4,4)(1,-1) \psline(1,-1)(4,-4)
\psline{->}(-4,0)(-1.5,-2.5) \psline(-1.5,-2.5)(0,-4)
\psline{->}(0,4)(2.5,1.5) \psline(2.5,1.5)(4,0) \rput{-45}(3,0.5){\large$\mathrm{Fix}(S,\pi)$}
\rput{-45}(2.5,-3){\large$\mathrm{Fix}(S)$}
 \psline{->}(-4.2,-4.2)(-4.2,-3.2) \rput(-4.6,-3.5){$\theta_2$}
 \psline{->}(-4.2,-4.2)(-3.2,-4.2) \rput(-3.5,-4.4){$\theta_1$}
\end{pspicture}
\caption{Fixed point spaces on the torus $(\theta_1,\theta_2)$.}
\label{fig:no34}
\end{figure}

Finally we illustrate the relation between  fixed point spaces of the involutions $R,S,SR$ and $(S,\pi)$ geometrically.  Buzzi and Lamb \cite{b1} show that  the intersection between the cone $\delta=0$ and the unit sphere in $\mathbb{C}^2$ is a torus $T$ parametrized by two angles $(\theta_1,\theta_2)$ and draw $\mathrm{Fix}R$ on $T$. In addition to that we  show the intersection between $\mathrm{Fix}S$ and the torus $T$ is given by the line $\theta_2=-\theta_1$. Also, we plot $\mathrm{Fix}(S,\pi)=\{(\theta_1,\theta_2)=(\theta_1,\pi-\theta_1)\}$ on the torus. The last thing is to intersect $\mathrm{Fix}SR$ with $T$ which gives a total of two points $(0,0)$ and $(\pi,\pi)$ (shown as large dots in the figure).

\bibliographystyle{amsplain}

\end{document}